\newtheorem{theorem}{Theorem}[section]
\newtheorem{proposition}[theorem]{Proposition}
\theoremstyle{definition}
\theoremstyle{remark}
\newtheorem{remark}[theorem]{Remark}
\newtheorem{question}[theorem]{Question}
\newtheorem{example}[theorem]{Example}
\newcounter{fig}
\newcommand{\f}{\refstepcounter{fig} Fig. \arabic{fig}. }
\numberwithin{equation}{section}
\DeclareMathOperator{\vol}{vol}
\DeclareMathOperator{\supp}{supp}
\DeclareMathOperator{\Hess}{Hess}
\DeclareMathOperator{\tr}{tr}
\renewcommand{\epsilon}{\varepsilon}
\renewcommand{\phi}{\varphi}
\title[Gromov's waist of non-radial Gaussian measures\dots]{Gromov's waist of non-radial Gaussian measures and radial non-Gaussian measures}
\author{Arseniy~Akopyan{$^\spadesuit$}}
\email{akopjan@gmail.com}
\author{Roman~Karasev{$^\clubsuit$}}
\email{r\_n\_karasev@mail.ru}
\urladdr{http://www.rkarasev.ru/en/}
\thanks{{$^\spadesuit$} Supported by the European Research Council (ERC) under the European Union's Horizon 2020 research and innovation programme (grant agreement  No 716117)}
\thanks{{$^\clubsuit$} Supported by the Federal professorship program grant 1.456.2016/1.4 and the Russian Foundation for Basic Research grants 18-01-00036 and 19-01-00169}
\address{{$^\spadesuit$} Institute of Science and Technology Austria (IST Austria), Am Campus 1, 3400 Klosterneuburg, Austria}
\address{{$^\clubsuit$} Moscow Institute of Physics and Technology, Institutskiy per. 9, Dolgoprudny, Russia 141700}
\address{{$^\clubsuit$} Institute for Information Transmission Problems RAS, Bolshoy Karetny per. 19, Moscow, Russia 127994}
\begin{document}

\begin{abstract}
We study the Gromov waist in the sense of $t$-neighborhoods for measures in the Euclidean space, motivated by the famous theorem of Gromov about the waist of radially symmetric Gaussian measures. In particular, it turns our possible to extend Gromov's original result to the case of not necessarily radially symmetric Gaussian measure. We also provide examples of measures having no $t$-neighborhood waist property, including a rather wide class of compactly supported radially symmetric measures and their maps into the Euclidean space of dimension at least 2. 

We use a simpler form of Gromov's pancake argument to produce some estimates of $t$-neighborhoods of (weighted) volume-critical submanifolds in the spirit of the waist theorems, including neighborhoods of algebraic manifolds in the complex projective space.

For reader's convenience, in one appendix of this paper we provide a more detailed explanation of the Caffarelli theorem that we use to handle not necessarily radially symmetric Gaussian measures. In the other appendix, we provide a comparison of different variations of Gromov's pancake method.
\end{abstract}

\subjclass[2010]{49Q20,53C20,53C23}

\maketitle

\section{Introduction}

In \cite{grom2003} Mikhail Gromov proved the waist of the Gaussian measure theorem: For any continuous map $f : \mathbb R^n\to\mathbb R^k$ and a radially symmetric Gaussian measure $\gamma$, it is possible to find a fiber $f^{-1}(y)$ such that for any $t>0$
\[
\gamma(f^{-1}(y) + t) \ge \gamma(\mathbb R^{n-k} + t),
\]
where $X+t$ denotes the $t$-neighborhood of a set $X$ and $\mathbb R^{n-k}\subset \mathbb R^n$ denotes any $(n-k)$-dimensional linear subspace of $\mathbb R^n$. This statement is a very general version of the concentration phenomenon that was previously extensively studied for functions, the case $k=1$ of the result.

In the recent papers \cite{klartag2016,ak2016ball,akopyan2017cat} a Minkowski content version of such a result was considered, where one does not require the inequality for all $t$, but requires a lower bound on the asymptotics for $t\to +0$. In this simplified problem it turned out that $\gamma$ can be replaced by other measures, such as the uniform measure on a centrally symmetric cube \cite{klartag2016}, the uniform measure on a Euclidean ball \cite{ak2016ball}, the uniform measure on a ball in the spaces of constant curvature \cite{akopyan2017cat}, and some other examples.

In this paper we address the question of extending the precise $t$-neighborhood waist theorem to measures other than the radially symmetric Gaussian measures, in the general understanding of \cite[Question 3.1.A]{grom2003}. More precisely, for a given finite Borel measure $\mu$ in $\mathbb R^n$ we ask if the following is true: For any continuous $f : \mathbb R^n\to \mathbb R^k$ there exists a fiber $f^{-1}(y)$ such that for any $t>0$
\[
\mu(f^{-1}(y) + t) \ge \mu(\mathbb R^{n-k} + t).
\]
Even restricting ourselves to radially symmetric measures, we find that the situation is not easy. A counterexample was given in \cite[Remark 5.7]{akopyan2017cat} showing that for $\mu$ uniform in the Euclidean ball of sufficiently high dimension there is no such precise $t$-neighborhood waist theorem.

In Section \ref{section:radial-non-gaussian} of this paper we give other counterexamples and discuss some positive results in the plane. In particular, Theorem \ref{theorem:counterexample-k-ge-2} asserts that no such $t$-neighborhood waist theorem is possible for compactly supported radial measures and $2\le k < n$.

On the positive side, we have the following extension of Gromov's theorem to the case of not necessarily radially symmetric Gaussian measures:

\begin{theorem}
\label{theorem:non-radial-gaussian}
Let $a_1\ge a_2 \ge \dots \ge a_n > 0$ and let $\gamma$ be a Gaussian measure in $\mathbb R^n$ with density
\[
\rho = e^{-a_1x_1^2 - a_2x_2^2 - \dots - a_nx_n^2}.
\]
Then for any continuous map $f : \mathbb R^n\to \mathbb R^k$ there exists a point $y\in\mathbb R^k$ such that for any $t>0$
\[
\gamma(f^{-1}(y) + t) \ge \gamma(\mathbb R^{n-k} + t).
\]
It is important that by $\mathbb R^{n-k}\subset\mathbb R^n$ in this theorem we denote the coordinate subspace spanned by the first $n-k$ coordinates.
\end{theorem}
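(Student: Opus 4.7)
The plan is to reduce the non-radial Gaussian case to the radial one using Caffarelli's contraction theorem, and then invoke Gromov's original waist theorem. Because the potential of $\gamma$ is $V(x)=\sum_i a_i x_i^2$ with $\Hess(V)=2\,\mathrm{diag}(a_1,\dots,a_n)\succeq 2a_n I_n$, Caffarelli's theorem (whose statement and proof are recalled in an appendix) provides a $1$-Lipschitz Brenier map $T$ that pushes the radial Gaussian $\tilde\gamma$ with density proportional to $e^{-a_n|x|^2}$ forward onto $\gamma$. Since both measures are product Gaussians, $T$ is in fact explicit: the diagonal linear contraction $T(x)=(\sqrt{a_n/a_1}\,x_1,\dots,\sqrt{a_n/a_n}\,x_n)$.

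Next I would apply Gromov's waist of the Gaussian theorem to the composed map $f\circ T:\mathbb R^n\to\mathbb R^k$ and the radial measure $\tilde\gamma$, producing a point $y\in\mathbb R^k$ with $\tilde\gamma((f\circ T)^{-1}(y)+t)\ge\tilde\gamma(\mathbb R^{n-k}+t)$ for every $t>0$. The $1$-Lipschitz property of $T$ yields $T^{-1}(A)+t\subseteq T^{-1}(A+t)$ for every set $A$, and combined with the pushforward identity $\gamma(B)=\tilde\gamma(T^{-1}(B))$ this gives
\[
\gamma(f^{-1}(y)+t)=\tilde\gamma\bigl(T^{-1}(f^{-1}(y)+t)\bigr)\ge\tilde\gamma\bigl((f\circ T)^{-1}(y)+t\bigr)\ge\tilde\gamma(\mathbb R^{n-k}+t).
\]

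The hardest step is bridging from this to the sharper inequality with $\gamma(\mathbb R^{n-k}+t)$ on the right-hand side, since a direct computation gives $\tilde\gamma(\mathbb R^{n-k}+t)\le\gamma(\mathbb R^{n-k}+t)$ in general, with strict inequality whenever some $a_j$ for $n-k+1\le j\le n-1$ strictly exceeds $a_n$. The additional input should come from the fact that the diagonal $T$ preserves the coordinate subspace, $T^{-1}(\mathbb R^{n-k})=\mathbb R^{n-k}$, while $T^{-1}(\mathbb R^{n-k}+t)$ is an \emph{ellipsoidal} neighborhood $\{x:\sum_{j>n-k}(a_n/a_j)x_j^2\le t^2\}$ whose $\tilde\gamma$-measure equals $\gamma(\mathbb R^{n-k}+t)$ exactly. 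The required statement is therefore equivalent to an ellipsoidal refinement of Gromov's waist theorem for $\tilde\gamma$, or equivalently to a version of the pancake argument that keeps track of direction-dependent constants in the Gaussian isoperimetric inequality instead of collapsing them to the worst-direction constant $\sqrt{2a_n}$. In the $k=1$ case this refinement already reduces to the direct use of the sharp anisotropic Borell--Sudakov--Tsirelson inequality for $\gamma$; for larger $k$ it is where I expect the main technical effort of the proof to lie.
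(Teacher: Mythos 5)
Your proposal takes a genuinely different route than the paper, and the gap you flag at the end is real and is not small: the argument as written does not close, and the missing piece is essentially as hard as the theorem itself. Concretely, you apply Caffarelli once, globally, to obtain a $1$-Lipschitz $T$ pushing the round Gaussian $\tilde\gamma$ (density $\propto e^{-a_n|x|^2}$) to $\gamma$, and then invoke Gromov's theorem for $\tilde\gamma$ and $f\circ T$. The chain of inequalities you write down is correct, but, as you observe, it only yields $\gamma(f^{-1}(y)+t)\ge\tilde\gamma(\mathbb R^{n-k}+t)$, and $\tilde\gamma(\mathbb R^{n-k}+t)\le\gamma(\mathbb R^{n-k}+t)$ with strict inequality as soon as some $a_j>a_n$ with $n-k<j<n$. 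In other words, a single global use of the worst-direction constant $a_n$ is lossy exactly when the theorem has content. Your proposed repair, an ``ellipsoidal'' version of Gromov's waist theorem where $t$-neighborhoods are replaced by the pulled-back ellipsoids $E_t=T^{-1}(B(t))$, would indeed suffice (since $T$ is linear, $\gamma(f^{-1}(y)+t)=\tilde\gamma((f\circ T)^{-1}(y)+E_t)$ and $\gamma(\mathbb R^{n-k}+t)=\tilde\gamma(\mathbb R^{n-k}+E_t)$ exactly), but that is an unproved strengthening of Gromov's theorem, not a black box you can cite, and you give no argument for it.

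The paper's proof avoids this dead end by \emph{not} reducing to the radial case at all. It runs the pancake argument directly for the anisotropic $\gamma$, and the place where Caffarelli enters is entirely different: for each pancake $P_i$, the measure $\gamma|_{P_i}$ is more log-concave than $\gamma$ itself, so the monotone transport $T_i$ from $\gamma$ to $\gamma|_{P_i}$ is $1$-Lipschitz; the ``center'' of the pancake is \emph{defined} as $c(P_i)=T_i(0)$, which is continuous in $P_i$ by Brenier stability and hence can be fed into the Borsuk--Ulam step. The per-pancake estimate is then obtained by approximating the potential $U$ of $T_i$ by $V(x)=U(\pi_A x)$ (so $S=\nabla V$ is a $1$-Lipschitz transport into the pancake direction $A$), and the base estimate is the linear case, proved separately by an eigenvalue-interlacing and substitution argument in the correct directions $a_{n-k+1},\dots,a_n$. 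That linear estimate is tight direction by direction, which is precisely what your global reduction gives up. So: same key tool (Caffarelli), applied at a different level (pancake centers rather than a global change of measure), and that difference is what lets the sharp anisotropic right-hand side survive.
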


The case of an arbitrary Gaussian measure with full-dimensional support is reduced to this one by translation and diagonalization of its covariance matrix, one only needs to care that a particular $(n-k)$-dimensional subspace is assumed in the right-hand side of the inequality.

To prove this theorem we use Gromov's original scheme, replacing the hard argument about choosing ``centers'' of the pancakes with an application of the Caffarelli theorem on $1$-Lipschitz monotone transportation. This not only allows to extend the result to the non-radial case, but also simplifies the original argument, at least in our understanding.

After exhibiting, in Section \ref{section:radial-non-gaussian}, examples of radially symmetric measures, for which the neighborhood waist theorem does not hold, we address the question of choosing $y=0$ in the waist theorems for odd maps in Section \ref{section:odd}. It turns out that the neighborhood waist theorem holds true for all radially symmetric measures and odd maps, as Theorem \ref{theorem:neighborhood-odd} asserts.

In Section \ref{section:manifold-neighborhood} we naturally pass to the following question: For which measure $\mu$ and a submanifold $X\subset \mathbb R^n$, or $X\subset\mathbb S^n$, can we claim the same neighborhood estimate as in the waist theorem? This question is inspired by the result of \cite{klartag2017}, and in particular we observe that the right neighborhood estimate holds for volume-critical submanifolds of $\mathbb S^n$ with the uniform measure, see Proposition \ref{proposition:sphere-critical}, and that the neighborhood waist theorem turns out to be true for all radially symmetric measures and homogeneous holomorphic maps, see Theorem \ref{theorem:radial-symmetric-homo-holo}.

In Appendix \ref{section:caffarelli} we give the statement and an explanation of the Caffarelli theorem, in Appendix~\ref{section:pancakes} we discuss Gromov's method of producing pancakes in Gromov's waist theorem.

\subsection*{Acknowledgments} The authors thank Alexey Balitskiy, Michael Blank, Alexander Esterov, Sergei Ivanov, Bo'az Klartag, Jan Maas, and the unknown referee for useful discussions, suggestions, and questions.

\section{Nonradial Gaussian measures: Proof of Theorem \ref{theorem:non-radial-gaussian}}	
\label{section:non-radial-gaussian}

Let us start with establishing the linear case of the theorem. To do so, it is sufficient to consider the orthogonal projection $f : \mathbb R^n\to A$ for a linear subspace of dimension at most $k$, and for $B=A^\perp$ we need to find the lower bound on $\gamma(\nu_t(B))$. Here we use more explicit notation $\nu_t(X)$ for the $t$-neighborhood of a point or of a set $X$. We can also assume $\dim A$ to be precisely $k$, since further projections onto subspaces of smaller dimension may only improve the required estimate. 

Note that the covariance form of the pushforward $f_* \gamma$ is obtained by restricting the covariance form of $\gamma$ to $A$. It is a folklore fact that the restriction of a quadratic form with eigenvalues $\lambda_1\le\lambda_2\le\dots\le\lambda_n$ produces a quadratic form with eigenvalues $\kappa_1\le\ldots\le\kappa_k$ such that 
\[
\kappa_1 \ge \lambda_1, \ldots \kappa_k \ge \lambda_k\quad\text{and}\quad \kappa_1\le \lambda_{n-k+1},\dots, \kappa_k\le\lambda_n.
\]
This fact is easily proven to induction, restricting the quadratic form to a codimension $1$ subspace several times. Hence if we take the orthonormal coordinates $y_1,\ldots,y_k$ in $A$ then the density of $f_* \gamma$ up to multiplication by a constant will be $e^{-b_1y_1^2 - \dots - b_k y_k^2}$ with 
\[
b_1 \ge a_{n-k+1},\dots, b_k \ge a_n
\]
and therefore
\begin{equation}
\label{equation:linearbound}
\frac{\gamma(\nu_t(B))}{\gamma(\mathbb R^n)} \ge \frac{\int_{\nu_t(0)} e^{-b_1y_1^2 - \dots - b_k y_k^2}\; dy_1\dots dy_k}{\int_{\mathbb R^k} e^{-b_1y_1^2 - \dots - b_k y_k^2}\; dy_1\dots dy_k} \ge \frac{\int_{\nu_t(0)} e^{-a_{n-k+1}x_{n-k+1}^2 - \dots - a_n x_n^2}\; dx_{n-k+1}\dots dx_n}{\int_{\mathbb R^k} e^{-a_{n-k+1}x_{n-k+1}^2 - \dots - a_n x_n^2}\; dx_{n-k+1}\dots dx_n},
\end{equation}
the last inequality is done by the substitutions $t_i^2 = b_ix_i^2$ and $t_i^2 = a_{n-k+i}x_{n-k+i}^2$, after these substitutions the same expression in the two sides of the inequality is integrated over the two domains, one containing the other. Thus the theorem holds for linear maps with a simple choice $y=0$.

For the general case, we follow Gromov's argument for the round Gaussian measure from \cite{grom2003} (see also the explanations in \cite{klartag2016}). Let us state an appropriate version of the pancake decomposition theorem:

\begin{theorem}[Essentially due to Gromov, \cite{grom2003}]
\label{theroem:pancakes}
Let $\mu$ be a measure in the ball $B(R)\subset\mathbb R^n$ with density bounded from below and from above by positive numbers. Let $\mathcal K(B(R))$ be all convex bodies in $B(R)$ with nonempty interior and assume we have a map $F : \mathcal K(B(R)) \to \mathbb R^k$ continuous in the Hausdorff metric, for some $1\le k < n$.

Then for any power of two $N = 2^I$ it is possible to produce a binary decomposition of $B(R)$ into convex parts $P_1,\ldots,P_N$ so that:

1) $\mu(P_1) = \dots = \mu(P_N)$;

2) $F(P_1) = \dots = F(P_N)$;

3) Given $\delta > 0$, for sufficiently large $N$ depending on $\mu$ and $\delta$, all the parts $P_i$ will be $\delta$-close to $k$-dimensional affine subspaces.
\end{theorem}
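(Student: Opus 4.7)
The plan is to reduce Theorem \ref{theroem:pancakes} to a single bisection lemma and iterate it $I$ times. The lemma asserts: any convex body $K \subset B(R)$ of positive $\mu$-measure admits a hyperplane cut into two convex sub-bodies of equal $\mu$-measure and equal $F$-value. Granted such a lemma, starting with $B(R)$ and bisecting each piece in turn, after $I$ rounds I obtain $N = 2^I$ pieces satisfying properties (1) and (2) automatically.

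For the bisection lemma I would use the Borsuk--Ulam theorem. For each direction $v \in \mathbb{S}^{n-1}$, the function $t \mapsto \mu\{x \in K : \langle x,v\rangle \le t\}$ is continuous and strictly increasing in $t$ across the support of $\mu|_K$, using the strictly positive lower bound on the density, so there is a unique level $t(v)$ which bisects $\mu(K)$. Let $K^{\pm}(v)$ denote the two resulting convex halves; they depend continuously on $v$ in the Hausdorff metric and satisfy $K^{\pm}(-v) = K^{\mp}(v)$. The map $g : \mathbb{S}^{n-1} \to \mathbb{R}^k$ defined by $g(v) = F(K^+(v)) - F(K^-(v))$ is therefore continuous and odd, and since $k < n$ the Borsuk--Ulam theorem produces $v_* \in \mathbb{S}^{n-1}$ with $g(v_*) = 0$, yielding the desired bisection.

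The main obstacle is property (3), that the pieces become $\delta$-close to $k$-dimensional affine subspaces once $N$ is large. The Borsuk--Ulam step produces only \emph{some} valid direction, and one needs to choose it so as to shrink the extent of $K$ in a ``long'' direction. The density bounds on $\mu$ force the Lebesgue volume of each piece to be of order $1/N$, which gives thinness in at least one direction, but not automatically in $n-k$ directions. I would upgrade this by exploiting the flexibility in the Borsuk--Ulam conclusion: the zero set of the odd map $g$ has large equivariant index and, by a suitable strengthening of Borsuk--Ulam, must intersect any sufficiently large $\mathbb{Z}/2$-symmetric family of directions, in particular the family of directions along which the John ellipsoid of $K$ realises one of its $n-k$ longest semi-axes. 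Choosing $v_*$ from such a family guarantees that each bisection cuts a long axis by a definite factor, and iterating drives the top $n-k$ semi-axes of every $P_i$ below $\delta$.

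The delicate point, which I expect to be the hardest part, is to turn this heuristic into a clean quantitative lemma giving geometric shrinkage of the long semi-axes at every step, uniformly in the shape of $K$; one must also check that a ``thickness'' statistic on convex bodies (John-ellipsoid semi-axes or some proxy) is continuous enough under Hausdorff convergence to feed into the equivariant argument. Once this quantitative shrinkage is established, condition (3) follows from the elementary observation that a convex body in $B(R)$ with small widths in $n-k$ directions lies Hausdorff-close to some $k$-dimensional affine slab.
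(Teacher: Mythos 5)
Your reduction to a single bisection lemma does not actually give property (2), and this is a genuine gap rather than a technical detail. If you bisect $B(R)$ into $K_1, K_2$ with $F(K_1)=F(K_2)$, and then independently bisect $K_1$ into $K_{11},K_{12}$ with $F(K_{11})=F(K_{12})$ and $K_2$ into $K_{21},K_{22}$ with $F(K_{21})=F(K_{22})$, nothing forces $F(K_{11})=F(K_{21})$. The constraint $F(P_1)=\dots=F(P_N)$ is a \emph{global} condition on the leaves of the binary tree, and a greedy, level-by-level application of Borsuk--Ulam cannot enforce it: each local bisection only equalises $F$ between two siblings, and there is no mechanism that propagates equality across the tree. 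What the paper does instead is parametrise the \emph{entire} hierarchy of cuts at once by a product of spheres (in the simplified version, each cut is a $\mu$-equipartitioning hyperplane parallel to a prescribed subspace $L_i$ of dimension $n-k-1$, so the direction at each node ranges over $S^k$ and the configuration space is $(S^k)^{N-1}$), and then applies a single $\mathfrak{S}^{(2)}_N$-equivariant Borsuk--Ulam theorem to a test map into $(\mathbb R^k)^N/\Delta(\mathbb R^k)$ built from the values $F(P_1),\dots,F(P_N)$. The dimension count $k(N-1)$ versus $k(N-1)$ is tight, which is exactly why the cut directions must be confined to $S^k$ rather than the full $\mathbb S^{n-1}$ you use.

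Your treatment of property (3) is also not quite on the right track, though it is closer in spirit. Exploiting leftover ``equivariant index'' of the Borsuk--Ulam zero set to steer the cutting direction towards a long axis is exactly what does \emph{not} work once the dimension count above is tight: in the configuration space $(S^k)^{N-1}$ the test map already uses up all the degrees of freedom, so you cannot further constrain the solution to cut along John-ellipsoid long axes. The paper's mechanism is simpler and pre-committed: one fixes in advance a sequence $\{L_i\}$ of $(n-k-1)$-dimensional subspaces that densely visits the Grassmannian, and requires the $i$th round of cuts to be parallel to $L_i$. Then, for any given $(k+1)$-dimensional direction $A$, infinitely many $L_i$ are nearly orthogonal to $A$, and an equipartitioning cut parallel to such an $L_i$ is forced to reduce the width of the piece in some direction in $A$ by a definite factor (depending only on $n$ and the density bounds). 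A volume argument on the projection $\pi_A(Q_i)$ then shows no piece can contain a $(k+1)$-disk of radius $\delta$ once $N$ is large. This argument requires no continuity of a thinness statistic and no strengthened Borsuk--Ulam; the pancake property comes from the pre-selected sequence $\{L_i\}$, not from steering the topological existence result. I would suggest reworking the proof around the global equivariant configuration space and the fixed sequence $\{L_i\}$; both ingredients are necessary, and neither is recoverable from the iterated-bisection framework you set up.
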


The last property is called the \emph{$k$-pancake property}. The measure we are going to plug into this theorem is the restriction of $\gamma$ to a big ball $B(R)$. The map $F$ will be composed of $f$ given in the waist theorem and a certain selection of a ``center'' $c(P_i)$ of a part with non-empty interior, which is the crucial part of our argument, different from the selection in \cite{grom2003,klartag2016} and other previous works. Since the complement of $B(R)$ has arbitrarily small Gaussian measure, this will result in an arbitrary small error term in the estimate, which is handled by a compactness argument for possible values of $y$, the common value of $f(c(P_i))$. We give more explanations on the pancake decomposition in Appendix~\ref{section:pancakes}, outlining a version of the proof of Theorem \ref{theroem:pancakes}.

The selection of the centers $c(P_i)$ of $P_i$ is the crucial part of the argument, we need to do it so that the ratio
\begin{equation}
\label{equation:ratio}
\frac{\gamma( \nu_t(c(P_i)) \cap P_i )}{\gamma( P_i )}
\end{equation}
is bounded from below by a certain constant so that the summation of such lower bounds produces the required total estimate. 

Consider the restriction $\mu_i = \gamma|_{P_i}$, we are interested in the estimate of \eqref{equation:ratio} from below that now assumes the form
\begin{equation}
\label{equation:ratio2}
\frac{\mu_i( \nu_t(c(P_i)))}{\mu_i(\mathbb R^n)}.
\end{equation}

Let us make the monotone transportation $T_i$ of (an appropriately scaled) $\gamma$ to $\mu_i$. The measure $\mu_i$ is \emph{more log-concave} than $\gamma$, that is $\mu_i$ can be expressed as the product of $\gamma$ and a log-concave function, the characteristic function of $P_i$ in our case. In this situation the Caffarelli theorem \cite{caffarelli2000} (see also \cite{koles2011} and explanations in Section \ref{section:caffarelli}) implies that the monotone transportation $T_i$ is $1$-Lipschitz.  Let us define the center $c(P_i)= T_i(0)$. By the stability propery of the monotone transportation (see, for example, \cite[Proposition 3.2]{brenier1991}) the centers depend continuously on $P_i$ in the Hausdorff metric while $P_i$ keeps nonempty interior; thus we can plug this selection of the center into the Borsuk--Ulam type theorem for the configuration space of such partition (more details are in \cite{grom2003} and \cite{klartag2016}) to ensure that
\[
f(c(P_1)) = \dots = f(c(P_N)) = y_N.
\] 

In order to have the total estimate it remains to bound \eqref{equation:ratio} (or \eqref{equation:ratio2}) for a pancake $P_i$ which is $\delta$-close to an affine subspace $A_i$. For normalization, we translate $P_i$ and $\mu_i$ so that the center $c(P_i)$ becomes the origin. Denote the translated $\mu_i$ by $\kappa_i$. The monotone transportation of (an appropriately scaled) $\gamma$ into $\kappa_i$ is the composition of the original transportation $T_i$ of $\gamma$ to $\mu_i$ and the translation, this can be seen from the fact that if we add a linear function to the the potential of the monotone transportation then the transportation gets composed with a translation, below we work with potentials in more details. $1$-Lipschitz property is also retained under a translation. Eventually we consider a measure $\kappa_i$ obtained from (an appropriately scaled) $\gamma$ by a $1$-Lipschitz monotone transportation $T_i$ (denoted by the same letter as before the translation) taking origin to the origin, from the pancake property the support of $\kappa_i$ is $\delta$-close to a linear subspace $A_i$ (we keep the same letter for the translated $A_i$), and we need to have a lower bound for 
\begin{equation}
\label{equation:ratio3}
\frac{\kappa_i( \nu_t(0))}{\kappa_i(\mathbb R^n)}.
\end{equation}

Put for brevity $A = A_i$ and $B = A^\perp$, now they are linear subspaces, also denote by $\pi_A$ and $\pi_B$ the corresponding orthogonal projections. The map $T_i$ is a monotone transportation, by definition it means that it has a convex potential 
\[
U : \mathbb R^n \to \mathbb R,\quad T_i(x) = \nabla U (x),
\]
which must have the minimum at the origin, since we assume $T_i(0) = 0$. The convexity of $U$ means that the Hessian is positive semidefinite, and the $1$-Lipschitz property of $T_i$ means that the difference
\[
\Hess U - dx_1^2 - \dots - dx_n^2
\] 
is negative semidefinite. 
Here we use the fact that $\Hess U$ exists almost everywhere.

Now we are going to approximate $U$ with another potential $V$ satisfying the same property of convexity and the same bound on the Hessian, producing a monotone $1$-Lipschitz transportation $S : \mathbb R^n\to A$, showing that $T_i$ is also approximated by $S=\nabla V$. 

Choose $\epsilon > 0$ and then choose a radius $R$ so that 
\[
\gamma(\nu_R(0)) \ge (1 - \epsilon) \gamma(\mathbb R^n).
\]
Let $X=\nu_R(A)$, then we also have
\[
\gamma(X) \ge (1 - \epsilon) \gamma(\mathbb R^n).
\]
The assumption that the image of $T_i$ lies in the neighborhood $\nu_\delta(A)$ means that 
\begin{equation}
\label{equation:derivative}
|\pi_B (\nabla U(x))| \le \delta.
\end{equation}

If we put $V(x) = U(\pi_A (x))$ then $V$ is also convex, differentiable, and the inequality
\[
\Hess V - dx_1^2 - \dots - dx_n^2\le 0
\]
holds almost everywhere (here we again use the fact about the eigenvalues of the restriction of a quadratic form). Its gradient $S=\nabla V$ is $1$-Lipschitz, and by integration \eqref{equation:derivative} from $\pi_A (x)$ to $x$ we have
\[
|V(x) - U(x)| \le (R+\epsilon)\delta
\]
on the neighborhood $\nu_\epsilon(X)$. By our choice of coordinates both $U$ and $V$ have the minimum at the origin.

It is easy to see, that the estimate $|V - U| < 1/2\epsilon^2$ on $\nu_\epsilon(X)$ implies
\[
|\nabla V - \nabla U| < \epsilon
\]
on $X$. Indeed, it is sufficient to prove this inequality in one-dimensional case, choosing a point $x_0$ where the inequality fails and the one-dimensional direction $e$ along which 
\[
\left| (\nabla V - \nabla U) \cdot e\right| = \left| \frac{\partial V}{\partial e} - \frac{\partial U}{\partial e} \right| \ge \epsilon.
\] 
In the one-dimensional case, we have without loss of generality 
\[
U(x_0) - V(x_0) \ge 0, U'(x_0) - V'(x_0) \ge \epsilon.
\]
For the second derivatives, we know that $0 \le U''(x),V''(x) \le 1$ almost everywhere, and therefore for the difference of functions and $x > x_0$ there holds the estimate:
\[
U(x) - V(x) \ge \epsilon (x-x_0) - \frac{1}{2} (x - x_0)^2.
\]
Putting $x = x_0 + \epsilon$ we have $U(x) - V(x) \ge 1/2\epsilon^2$, a contradiction.

Therefore for any positive pancakeness parameter
\begin{equation}
\label{equation:choiceofdelta}
\delta < \frac{\epsilon^2}{4(R+\epsilon)}
\end{equation}
we have the estimate 
\[
|T_i(x) - S(x)| \le \epsilon
\]
on the set $X$. The map $S$ is $1$-Lipschitz, it takes $\nu_t(B)$ to $\nu_t(0)\cap A$ and therefore $T_i$ takes $\nu_{t-\epsilon}(B) \cap X$ to $\nu_t(0)$. In view of \eqref{equation:linearbound} we establish
\begin{multline}
\label{equation:pancakedelta}
\frac{\gamma( \nu_t(c(P_i)) \cap P_i )}{\gamma( P_i )} = \frac{\kappa_i( \nu_t(0))}{\kappa_i(\mathbb R^n)}\ge \frac{\gamma(\nu_{t-\epsilon}(B))}{\gamma(\mathbb R^n)} - \epsilon \ge\\
\ge  \frac{\int_{\nu_{t-\epsilon}(0)} e^{-a_{n-k+1}x_{n-k+1}^2 - \dots - a_n x_n^2}\; dx_{n-k+1}\dots dx_n}{\int_{\mathbb R^k} e^{-a_{n-k+1}x_{n-k+1}^2 - \dots - a_n x_n^2}\; dx_{n-k+1}\dots dx_n} - \epsilon.
\end{multline}

The rest of the proof is done like the argument in \cite{grom2003} or \cite{klartag2016}. Now we choose a sequence $\epsilon_N\to +0$, choose appropriate $R_N$ and $\delta_N \to +0$ depending on $\epsilon_N$ and satisfying \eqref{equation:choiceofdelta}. The Borsuk--Ulam argument ensures that the centers of the pancakes on $N$th stage go to a single point $y_N$ under the map $f$. For such $y_N$, the summation over the pancakes gives the bound
\[
\frac{\gamma\left( \nu_t(f^{-1}(y_N)) \right)}{\gamma(\mathbb R^n)} \ge \frac{\int_{\nu_{t-\epsilon_n}(0)} e^{-a_{n-k+1}x_{n-k+1}^2 - \dots - a_n x_n^2}\; dx_{n-k+1}\dots dx_n}{\int_{\mathbb R^k} e^{-a_{n-k+1}x_{n-k+1}^2 - \dots - a_n x_n^2}\; dx_{n-k+1}\dots dx_n} - \epsilon_N.
\]
We may also assume that $y_N\to y$ from compactness considerations and for such $y$ the above bound becomes precisely the required bound in the limit. 

\begin{remark}
The given proof has a serious simplification compared to the original argument, achieved by using the Caffarelli theorem. It would be nice to have a version of the Caffarelli theorem on the sphere that would allow to simplify the proof of Gromov's waist of the sphere theorem (see e.g. \cite{mem2009}). At the moment we are not completely sure about the precise statement in the spherical case. To start with, we need to handle the following problem: Given a spherical convex set $C\subset \mathbb S^n$ and a hemisphere $H\subset \mathbb S^n$, find a $1$-Lipschitz map sending the uniform measure on $H$ to a multiple of the uniform measure on $C$. After that there remains the question of continuous dependence of the map on $H$ and $C$.

Another possibly useful version would be to find a $1$-Lipschitz map $f : \mathbb S^n\to C$ mapping the uniform measure of the sphere to a multiple of the uniform measure on $C$. The candidate for a centerpoint of $C$ will be a point $y\in C$ whose preimage $f^{-1}(y)$ contains two opposite points of the sphere, like in the Borsuk--Ulam theorem. But with such a statement the continuous dependence of the centerpoint on $C$ will also be problematic.
\end{remark}

\section{Existence of waist theorems for some radial measures}
\label{section:radial-non-gaussian}

Possibly the easiest case to consider is when $\mu$ is uniform on the unit sphere in $\mathbb R^n$. There is Gromov's waist of the sphere theorem \cite{grom2003,mem2009}, but our question is not the same, because unlike that theorem the function is defined inside and outside the sphere in the Euclidean space and the distance we use is the Euclidean distance. To put it short, we have the following non-trivial observation:

\begin{theorem}
A precise $t$-neighborhood waist theorem for $\mu$ distributed uniformly on the unit sphere and maps $f : \mathbb R^n\to\mathbb R^k$ is only possible for $n\le 2$ or $n\le k$.
\end{theorem}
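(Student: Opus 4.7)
The claim characterizes when the $t$-neighborhood waist inequality holds for the uniform sphere measure, and I would prove the two implications separately.

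For the positive direction, the case $n \leq k$ is essentially trivial: the reference $\mathbb R^{n-k}$ collapses to a single point at the origin, so for any continuous $f$ the fiber $f^{-1}(f(0))$ contains $0$ and hence $f^{-1}(f(0))+t \supseteq B_t(0)$, giving $\mu(f^{-1}(f(0))+t) \ge \mu(B_t(0)) = \mu(\mathbb R^{n-k}+t)$. The genuinely interesting positive case is $n=2$, $k=1$; here I would take $y$ to be the median of $f|_{S^1}$ so that $f^{-1}(y)$ separates $S^1$ into two arcs of equal $\mu$-measure, and then invoke a planar isoperimetric comparison (for instance by Steiner symmetrization perpendicular to a candidate diameter) to conclude that any mass-bisecting closed set in $\mathbb R^2$ has Euclidean $t$-neighborhood on $S^1$ at least as large as that of a diameter.

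For the negative direction with $2 \leq k < n$, I would invoke Theorem~\ref{theorem:counterexample-k-ge-2}, noting that the uniform sphere measure is compactly supported and rotation-invariant. If that theorem is formally restricted to measures with a radial density, I would first approximate the sphere measure by the uniform measure $\mu_\varepsilon$ on a thin spherical shell $\{1-\varepsilon \leq |x| \leq 1\}$, apply the theorem to each $\mu_\varepsilon$ to obtain a counterexample $f_\varepsilon$, and pass to the limit using the continuity of the $t$-neighborhood functional in the weakly convergent measures (together with uniform bounds in $t$).

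The remaining case $k=1$ and $n \geq 3$ demands a separate explicit construction. I would look for $f$ of the form $f(x) = x_1^2 + \varepsilon\, h(x)$ with $h$ a small perturbation chosen so that no level set of $f$ coincides with a hyperplane through the origin; equivalently, every level set meets $S^{n-1}$ only in perturbed pairs of parallel latitude sub-spheres rather than an equatorial great $(n-2)$-sphere. A direct computation modeled on the observation that two latitude circles on $S^2$ at heights $\pm \sqrt{3}/2$ (of total intrinsic length $2\pi$) have Euclidean $t$-neighborhood area approximately $4\pi t \sqrt{1 - t^2/4} < 4\pi t$ should then yield the required strict deficit at some $t$ for every fiber. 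The main obstacle is enforcing the deficit \emph{uniformly in $y$}: one must preclude any ``kissing'' fibers that match the reference by tangential contact with $S^{n-1}$ along a curve of length $2\pi$, and this forces a careful tuning of the perturbation $h$ (possibly combining a coordinate shift with a transverse anisotropy) so that neither a true great circle nor a smoothly tangent quadric can arise as a fiber.
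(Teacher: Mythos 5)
Your proposal diverges from the paper on all three nontrivial sub-cases and in each has a genuine gap.

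For $n=2$, $k=1$, you fix $y$ as the median of $f|_{S^1}$ and appeal to an isoperimetric statement that ``any mass-bisecting closed set in $\mathbb R^2$ has Euclidean $t$-neighborhood on $S^1$ at least as large as that of a diameter.'' That statement, as written, is false: the two-point set $\{(1,0),(-1,0)\}$ mass-bisects $S^1$ but its $t$-neighborhood meets $S^1$ in arcs of total angular length $8\arcsin(t/2)$, which is strictly less than $4\arcsin t$ for $t$ close to $1$. To salvage the argument you would need to use the fact that $f^{-1}(y)$ is a level set separating $\{f<y\}$ from $\{f>y\}$, not just a bisecting set, and even then the median value of $y$ is not the right choice for all $t$ simultaneously. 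The paper instead takes $y$ and $T$ jointly minimal so that $f^{-1}(y)+T \supseteq S^1$, uses the minimality to produce disks $B_p(T)$ and $B_q(T)$ on which $f\ge y$ and $f\le y$ respectively, and for each $0<t<T$ applies the intermediate value theorem along the circle of radius $\sqrt{1-t^2}$ to get two separated points of $f^{-1}(y)$ there; the two resulting arcs on $S^1$ are disjoint and have total length exactly $4\arcsin t$. This is a compactness-plus-IVT argument, not an isoperimetric one.

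For $2\le k<n$, invoking Theorem~\ref{theorem:counterexample-k-ge-2} is circular: in the paper that theorem is stated \emph{after} the present one, and its proof literally reads ``the last construction in the previous proof works,'' i.e.\ it relies on the very construction you are trying to establish here. You need to carry out the construction directly: take $f_1,\dots,f_{k-1}$ to be the first $k-1$ linear coordinates, observe that $t=1$ forces the chosen fiber to lie in $\mathbb R^{n-k+1}=\{f_1=\dots=f_{k-1}=0\}$ and pass through the origin, and then choose $f_k$ on $\mathbb R^{n-k+1}$ so that the fiber through the origin is close to a unit radius segment $[0,p]$; such a fiber is close to half of $\mathbb R^{n-k}$, which loses measure at intermediate $0<t<1$. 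The approximation-by-shells step you propose is unnecessary once you do this directly (and would need extra care, since the $t$-neighborhood inequality is required for all $t$ at once, and weak convergence does not automatically preserve such a uniform family of inequalities).

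For $k=1$, $n\ge3$, the ansatz $f(x)=x_1^2+\varepsilon h(x)$ does not have the crucial forcing mechanism. You yourself flag the core difficulty (enforcing a deficit uniformly in $y$), but the quadratic starting point makes it worse: the level sets near $x_1=0$ on the sphere are \emph{pairs} of nearly-equatorial $(n-2)$-spheres which together can have $t$-neighborhood measure comparable to or exceeding that of a great sphere, so you would have to rule out \emph{all} those levels, not just the one at $x_1=0$. The paper starts from the radial function $g(x)=|x-p|$, for which the constraint at $t=1$ already singles out the unique level $\{g=1\}$, then modifies $g$ only near $\{g=1\}\cap S^{n-1}$ (keeping $f\le g$ so the uniqueness is preserved) to make that level orthogonal to $S^{n-1}$; the resulting $F\cap S^{n-1}$ is an $(n-2)$-sphere of radius strictly less than $1$, whose $t$-neighborhood on $S^{n-1}$ has strictly smaller asymptotics as $t\to 0^+$ than that of a great hypersphere. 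That uniqueness-at-$t=1$ step is exactly what eliminates the ``uniformly in $y$'' obstacle, and your construction has no analogue of it.
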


\begin{proof}
The case $n\le k$ is trivial, the origin is the fiber we need. Consider the case $n=2$ and $k=1$, that is we have a continuous function $f : \mathbb R^2\to\mathbb R$.

Let us find the minimal $T$ such that there exists a fiber $f^{-1}(y)$ whose $T$-neighborhood covers the whole circle $\mathbb S^1$ on which $\mu$ is supported, we evidently have $T\le 1$ be considering the fiber passing through the origin. 

For any $p\in \mathbb S^1$ consider the minimum and the maximum of $f$ in the disk $B_p(T)$, they are continuous functions $m(p,T)$ and $M(p,T)$ of $p$ and $T$. By the choice of $y$ and $T$ we have
\[
\forall p\in \mathbb S^1,\quad m(p,T) \le y \le M(p,T).
\]
The assumption of the minimality of $T$ shows that both equalities must become equalities 
\[
m(p,T) = y = M(q, T),
\]
for some $p,q\in\mathbb S^1$, since otherwise the intersection of all segments $[m(p,T), M(p, T)]$ consisted of a nonzero segment (because we are dealing with continuous functions on a compactum) and we could decrease $T$ keeping this intersection non-empty by the uniform continuity of $m$ and $M$.

Now we have two disks $B_p(T)$ and $B_q(T)$ with the property that $f\ge y$ on the former and $f\le y$ on the latter. For any $0<t<T$ the circle $C = \mathbb S^1(\sqrt{1 - t^2})$ intersects both of them.
	\parbox[b]{0.5\textwidth}{
		\begin{center}
			\includegraphics{fig-gaussian-1}\\
		\f \label{fig:BT1} 
		\end{center}
	}
	\hskip 0.6cm
	\parbox[b]{0.4\textwidth}{
		\begin{center}
			\includegraphics{fig-gaussian-2}\\
		\f \label{fig:BT2}	
		\end{center}
	}

Then by the intermediate value theorem there exist at least two points $a,b\in C$ such that $f(a) = f(b) = y$ and such that $a$ and $b$ cannot be simultaneously covered by the interior of a $B_{p'}(T)$ for $p'\in\mathbb S^1$, see Figures \ref{fig:BT1} and \ref{fig:BT2}. Therefore the arcs $\mathbb S^1\cap B_a(t)$ and $\mathbb S^1\cap B_b(t)$ are disjoint and they together show that $f^{-1}(y)+t$ intersects $\mathbb S^1$ by a sufficiently big set.

Now we are going to describe the counterexamples for $n\ge 3$ by a modification of \cite[Remark 5.7]{akopyan2017cat}. We will have two essentially different cases.

Case $k=1$: We choose $p\in\mathbb S^{n-1}$ and start from the function $g(x) = |x - p|$. This function has the $t$-neighborhood property for $t=1$ satisfied on the level $G = \{g = 1\}$ only. Now we modify $g$ near the intersection $G\cap \mathbb S^{n-1}$ so that the new fiber $F = \{f = 1\}$ becomes orthogonal to $\mathbb S^{n-1}$ in their intersection and $F$ still remains the only fiber satisfying the $1$-neighborhood property, the latter is guaranteed by choosing $f\le g$. In fact, this can be made in two-dimensions and extended to higher dimensions by rotation invariance about the $0p$ axis, see Figure \ref{fig:counter1}.

Now we test small $t>0$ against the constructed $f$ and its fiber $F$. The orthogonality condition provides that for $n=2$ the asymptotics of $(F + t)\cap \mathbb S^1$ is correct. For $n\ge 3$ the intersection $I = F\cap\mathbb S^{n-1}$ is an $(n-2)$-dimensional sphere of radius strictly less than $1$ and from the orthogonality it follows that the asymptotics in $t\to +0$ of the surface area of $(F+t)\cap \mathbb S^{n-1}$ is the same as of $(I+t)\cap \mathbb S^{n-1}$, which is insufficient to match $(\mathbb R^{n-1} + t)\cap \mathbb S^{n-1}$, just because $I$ has smaller $(n-2)$ volume than $\mathbb R^{n-1}\cap \mathbb S^{n-1}$.

	\parbox[b]{0.5\textwidth}{
		\begin{center}
			\includegraphics{fig-gaussian-3}\\
		\f \label{fig:counter1} 
		\end{center}
	}
	\hskip 0.6cm
	\parbox[b]{0.4\textwidth}{
		\begin{center}
			\includegraphics{fig-gaussian-4}\\
		\f \label{fig:counter2}	
		\end{center}
	}

Case $2\le k < n$: Let us build a counterexample as follows. The components $f_1,\ldots,f_{k-1}$ will be just the linear coordinates. Considering the value $t=1$ we readily see that the fiber presumably satisfying the $t$-neighborhood waist theorem must lie in 
\[
\mathbb R^{n-k+1} = \{f_1 = \dots = f_{k-1} = 0\},
\]
otherwise the $t$-neighborhood would not reach some of the points with $f_i=\pm 1$.

Moreover, for any definition of the last coordinate $f_k :\mathbb R^n\to\mathbb R^k$ the fiber $F=f^{-1}(y)$ we can presumably take must pass through the origin. Indeed, it already has to lie in $\mathbb R^{n-k+1}$ and in order to reach a point $p$ with $f_1=\pm 1$ with $t=1$ it must pass through the origin since the origin is the only point of $\mathbb R^{n-k+1}$ at distance at most $1$ from $p$. 

Now we pass to $\mathbb R^{n-k+1}$ noting that $n-k+1\ge 2$. It remains to build a function $f_k : \mathbb R^{n-k+1}\to \mathbb R$ whose fiber passing through the origin is very close to a radius segment $[0,p]$, $|p|=1$, see Figure \ref{fig:counter2}. Then the fiber $F$ that we have to choose assuming the $t$-neighborhood waist theorem will be close to a half of $\mathbb R^{n-k}$ instead of the full $\mathbb R^{n-k}$ producing losses in the intersection $(F+t)\cap\mathbb S^{n-1}$ for $0<t<1$.
\end{proof}

In fact the last counterexample extends to arbitrary radially symmetric and compactly supported measure.

\begin{theorem}
\label{theorem:counterexample-k-ge-2}
Let $\mu$ be a radially symmetric compactly supported measure in $\mathbb R^n$ different from the delta-measure at the origin. Then there is no $t$-neighborhood waist theorem for $\mu$ and maps $f : \mathbb R^n\to\mathbb R^k$ when $2\le k < n$.
\end{theorem}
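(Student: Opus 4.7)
I extend the explicit counterexample used in the sphere case (Case $2\le k<n$ of the preceding theorem) to an arbitrary radial compactly supported $\mu$ different from the delta at the origin. By rescaling, assume that $R:=\sup\{|x|:x\in\supp\mu\}$ equals $1$; here $R>0$ because $\mu$ is not concentrated at the origin, and since $\supp\mu$ is closed and radial, the entire unit sphere $\mathbb S^{n-1}$ lies in $\supp\mu$. Set $f=(f_1,\dots,f_k):\mathbb R^n\to\mathbb R^k$ with $f_i(x)=x_i$ for $i\le k-1$ and
\[
f_k(x)=x_{k+1}^2+\bigl(\max(-x_k,\,0)\bigr)^2.
\]
This $f$ is continuous with $f(0)=0$, and
\[
F:=f^{-1}(0)=\{x\in\mathbb R^n:x_1=\dots=x_{k-1}=0,\ x_{k+1}=0,\ x_k\ge 0\}
\]
is an $(n-k)$-dimensional half-subspace sitting inside the $(n-k)$-dimensional linear subspace $V:=\{x:x_1=\dots=x_{k-1}=0,\ x_{k+1}=0\}$.

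\emph{Step 1: the only possible $y$ is $0$.} Apply the waist inequality at $t=1$. Since $\supp\mu\subset\overline{B(1)}$, every support point lies within distance $1$ of $\mathbb R^{n-k}$, so $\mu(\mathbb R^{n-k}+1)=\mu(\mathbb R^n)$, and the inequality forces $\supp\mu\subset f^{-1}(y)+1$. The fiber lies in the affine subspace $\{x_j=y_j:j\le k-1\}$; applying $\dist(\pm e_i,f^{-1}(y))\le 1$ for $i\le k-1$ (recalling $\pm e_i\in\mathbb S^{n-1}\subset\supp\mu$) gives the pair of bounds $(1\mp y_i)^2+\sum_{j\le k-1,\,j\ne i}y_j^2\le 1$, whose sum yields $y_1=\dots=y_{k-1}=0$. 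Consequently $f^{-1}(y)\subset\mathbb R^{n-k+1}:=\{x_1=\dots=x_{k-1}=0\}$. For any $q$ in this subspace, $|\pm e_i-q|^2=1+|q|^2\ge 1$ with equality iff $q=0$, so the same distance bound forces $0\in f^{-1}(y)$, and since $f(0)=0$ we conclude $y=0$.

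\emph{Step 2: $y=0$ fails for every $0<t<1$.} By radial symmetry $\mu(\mathbb R^{n-k}+t)=\mu(V+t)$, so it suffices to prove $\mu(F+t)<\mu(V+t)$. Decompose $\mu=\int_0^1\mu_r\,d\nu(r)$, where $\mu_r$ is the uniform probability on the sphere of radius $r$ and $\nu$ is the radial distribution of $\mu$, which satisfies $1\in\supp\nu$. Writing $x=(u,v)\in V\oplus V^\perp$, the slice of $V+t$ by the sphere of radius $r$ equals the band $\{|v|\le t\}$, while the slice of $F+t$ is the disjoint union of the half-band $\{u_k\ge 0,\ |v|\le t\}$ and the half-cap $\{u_k<0,\ u_k^2+|v|^2\le t^2\}$. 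For $r\le t$ both slices fill the whole sphere; for $r>t$, the half-band has exactly half of the band's area (by the reflection $u_k\leftrightarrow -u_k$), and the half-cap has strictly less area than the opposite half-band, because on the sphere of radius $r>t$ the set $\{u_k^2+|v|^2\le t^2\}$ is strictly contained in $\{|v|^2\le t^2\}$ with positive-area gap. Hence $\mu_r(F+t)<\mu_r(V+t)$ for every $r>t$, and since $1\in\supp\nu$ gives $\nu((t,1])>0$ for all $t<1$, integrating yields $\mu(F+t)<\mu(V+t)$, contradicting the waist inequality.

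The main (mild) technical point is the sphere-by-sphere comparison in Step 2: seeing $F+t$ as ``half of $V+t$ plus an extra half-cap'' and then observing that the half-cap, being cut out by the stricter inequality $u_k^2+|v|^2\le t^2$, has strictly less area than the opposite half-band for every $r>t$. Given this comparison, the remainder (rescaling to $R=1$, the distance calculation of Step~1, and the final radial integration) is a direct adaptation of the sphere counterexample from the preceding proof.
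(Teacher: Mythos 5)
Your proof is correct and follows the same route as the paper --- extend the sphere-case counterexample (linear $f_1,\dots,f_{k-1}$ plus an engineered $f_k$ forcing the fiber to be a half-subspace) to an arbitrary compactly supported radial $\mu$. The paper's own proof of this theorem is two sentences, deferring to the previous proof where the fiber is only described as ``arbitrarily close to a half of $\mathbb R^{n-k}$,'' and the key comparison $\mu(F+t)<\mu(\mathbb R^{n-k}+t)$ for intermediate $t$ is not written out. You do this explicitly and cleanly: the choice $f_k(x)=x_{k+1}^2+\bigl(\max(-x_k,0)\bigr)^2$ makes the fiber exactly a half-hyperplane (no approximation needed); Step~1 pins down $y=0$ via the $t=1$ constraint on the points $\pm e_i\in\mathbb S^{n-1}\subset\supp\mu$; Step~2 disintegrates $\mu$ into spheres and observes that the slice of $F+t$ decomposes as a half-band (exactly half the measure of the full band, by reflection) plus a half-cap strictly contained in the opposite half-band with a positive-measure gap for every $r>t$, so $1\in\supp\nu$ yields the strict deficit. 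You also handle the general support correctly: the paper says ``by scaling assume $\supp\mu$ is precisely the unit ball,'' which is not given by the hypotheses, whereas you only use that the outer sphere of radius $R$ lies in $\supp\mu$ by closedness and radial symmetry. One small phrasing point: from $\mu(f^{-1}(y)+1)=\mu(\mathbb R^n)$ one gets $\supp\mu\subset\overline{f^{-1}(y)+1}$ if the $t$-neighborhood is taken open, rather than $\supp\mu\subset f^{-1}(y)+1$ as written; but the bound $\dist(p,f^{-1}(y))\le 1$ for $p\in\supp\mu$, which is what your distance computation actually uses, follows either way.
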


\begin{proof}
By scaling we assume that the support of $\mu$ is precisely the unit ball. Then the last construction in the previous proof works, considering $1$-neighborhoods we need to choose the fiber passing through the origin and arbitrarily close to a half of $\mathbb R^{n-k}$. But this fiber does not work for intermediary $0<t<1$.
\end{proof}

So far we only have positive examples in the plane, but it is also possible to make a counterexample. 

\begin{example}
Take $\mu$ so that half of it is the delta-measure at the origin and the other half is uniformly distributed on the unit sphere. Let the function $f : \mathbb R^2\to\mathbb R$ be just $f(x) = |x|$. Assuming the $t$-neighborhood waist property for this measure and considering $t\to +0$ we see that $y$ has only two possibilities, $y = 0$ and $y=1$.

In both cases considering $t\to 1-0$ exhibits a discontinuous jump of $\mu(f^{-1}(y)+t)$ to its maximal value, which contradicts the inequality
\[
\mu(f^{-1}(y)+t) \ge \mu(\mathbb R + t),
\]
since the right hand part has no jump.
\end{example}

There remain low-dimensional questions, which we still cannot answer:

\begin{question}
Does the uniform measure on the two-dimensional Euclidean disk have the $t$-neighborhood waist property? The same question for the uniform measure on the three-dimensional ball.
\end{question}

Note that the counterexample in \cite[Remark 5.7]{akopyan2017cat} starts working from dimension $4$.

\section{Waist for radial measures and odd maps}
\label{section:odd}

The waist theorems estimate the measure of $f^{-1}(y) + t$, but it is not clear which choice of $y$ is good in every particular situation. There is an easy case when we can choose a particular $y$, the case of odd maps. In \cite{ahk2016} it was noted that for a continuous odd map $f : \mathbb S^n \to \mathbb R^k$ the fiber $f^{-1}(0)$ intersects every $k$-dimensional subsphere $S\subset \mathbb S^n$ at least twice; this is sufficient to invoke Crofton's formula and conclude that the $(n-k)$-dimensional volume (in a certain sense) of the fiber is sufficiently large. But in order to estimate the $n$-dimensional volume of the neighborhood $f^{-1}(0)+t$ we need a trickier argument:

\begin{theorem}
\label{theorem:neighborhood-odd}
Let $f : \mathbb S^n \to \mathbb R^k$ be an odd continuous map and let $t>0$. Then
\[
\vol (f^{-1}(0) + t) \ge \vol (\mathbb S^{n-k} + t).
\]
\end{theorem}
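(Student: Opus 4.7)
My plan is to prove the theorem by adapting the pancake argument underlying Gromov's waist-of-the-sphere theorem \cite{grom2003,mem2009} to the $\mathbb{Z}/2$-equivariant setting, using the oddness of $f$ to force the distinguished value produced by the topological step to be $0$.

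First I would establish a $\mathbb{Z}/2$-equivariant spherical pancake decomposition: for each $N=2^I$, partition $\mathbb{S}^n$ into equal-area spherically convex pieces $P_1,\dots,P_N$, each $\delta$-close in Hausdorff distance to a great $(n-k)$-subsphere, with $\delta\to 0$ as $N\to\infty$, and with the antipodal map permuting the pieces (pairing $P_i$ with $-P_i=P_{\iota(i)}$ for an involution $\iota$ on the index set). This can be arranged by performing the binary bisection of the spherical version of Theorem \ref{theroem:pancakes} using only great hyperspheres (intersections with hyperplanes through the origin in $\mathbb{R}^{n+1}$), which are automatically antipodally invariant. In each pancake I would then select a center $c(P_i)\in P_i$ depending continuously on the partition and equivariantly under the antipode, so that $c(-P_i)=-c(P_i)$; natural choices are the projection to $\mathbb{S}^n$ of the Euclidean barycenter of $P_i$, or the Gromov--Memarian hemispherical circumscription, both of which manifestly respect the antipodal involution.

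For such a center, the argument of \cite{grom2003,mem2009} gives the per-pancake estimate
\[
\frac{\vol(\nu_t(c(P_i))\cap P_i)}{\vol(P_i)}\ge (1-o(1))\,\frac{\vol(\mathbb{S}^{n-k}+t)}{\vol(\mathbb{S}^n)}\qquad(\delta\to 0).
\]
The configuration space of $\mathbb{Z}/2$-equivariant $N$-pancake partitions carries a residual $\mathbb{Z}/2$-action, and the map $\Phi$ sending a partition to the tuple $(f(c(P_1)),\dots,f(c(P_N)))\in(\mathbb{R}^k)^N$ is continuous and equivariant, where $\mathbb{Z}/2$ acts on the target by negation (since $f$ is odd and $c(-P_i)=-c(P_i)$). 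A Borsuk--Ulam-type argument on this equivariant configuration space, of the same flavor as in \cite{grom2003,klartag2016}, produces a partition on which all entries of $\Phi$ coincide at a common value $y_N$; equivariance then forces $y_N=-y_N$, hence $y_N=0$. Summing the per-pancake estimates and letting $N\to\infty$, $\delta\to 0$ yields $\vol(f^{-1}(0)+t)\ge\vol(\mathbb{S}^{n-k}+t)$.

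I expect the main obstacle to be the spherical per-pancake estimate: as noted in the remark following Theorem \ref{theorem:non-radial-gaussian}, no Caffarelli-style $1$-Lipschitz transportation is currently known on $\mathbb{S}^n$, so the transparent transportation argument used in the Gaussian case is unavailable, and one has to rely on the more intricate hemispherical circumscription of Gromov--Memarian. The equivariant bisection and the equivariant Borsuk--Ulam step are comparatively routine modifications of the non-equivariant pancake argument, but care must be taken that the center-selection depends continuously on the partition in a way compatible with the configuration-space topology used by the topological step.
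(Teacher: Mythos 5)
Your approach matches the paper's: both restrict attention to antipodally symmetric binary partitions of $\mathbb S^n$ (by great hyperspheres), pick equivariant centers, run a Borsuk--Ulam-type argument on the reduced configuration space so that the common value $f(c(P_i))$ is forced to be $0$, and sum the Gromov--Memarian per-pancake estimates. Two caveats are worth flagging. First, the pancakes should be $\delta$-close to great $k$-dimensional subspheres, not $(n-k)$-dimensional ones as you wrote; it is the thinness in the $n-k$ normal directions that makes the ratio $\vol(\nu_t(c(P_i))\cap P_i)/\vol(P_i)$ comparable to $\vol(\mathbb S^{n-k}+t)/\vol(\mathbb S^n)$. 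Second, the paper locates the genuine work exactly where you judge it ``comparatively routine'': the modified Borsuk--Ulam statement, where after choosing the root normal one only chooses normals on one subtree, so the configuration space is a product of $N/2$ copies of $\mathbb S^m$ (with $m=k+1$), the number of constraints also drops to $mN/2$, and the acting group changes from $\Sigma_N^{(2)}$ to $\mathbb Z/2\times\Sigma_{N/2}$; the nondegenerate-orbit count is then checked via a test-map argument. The per-pancake estimate, which you identify as the main obstacle, is in fact imported unchanged from Gromov--Memarian.
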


This theorem is an improved version of a particular case of \cite[Theorem 5.1.2]{palic2018}, here we provide a simpler proof of it.

\begin{proof}
The proof of Gromov and Memarian \cite{grom2003,mem2009} mostly works in this case; the thing that needs an adjustment is the Borsuk--Ulam-type theorem for partitions of the sphere with a binary tree hierarchy of cuts. 

If we want a partition into $N=2^\ell$ parts $P_1,\ldots, P_N$, use hyperplanes with the choice of the normal from an $m$-dimensional sphere ($m=k+1$) each time, then the configuration space $\mathcal M$ of the binary partitions with the choice of the normals has dimension $m(N-1)$. This is sufficient to satisfy the constraints
\[
f(c(P_1)) = \dots = f(c(P_N)),\quad \vol P_1 = \dots = \vol P_N.
\]
If we count the constraints then we see that we have precisely $m(N-1)$ constraints. The free group action on $\mathcal M$ in this Borsuk--Ulam-type result is the group of symmetries of the graded binary tree, which is $\Sigma_N^{(2)}$, the $2$-Sylow subgroup of the permutation group.

Now, for centrally symmetric partitions, we have a choice of the normal at the root of the tree, and after this we only have to choose the normals at one of the two subtrees below the root. Thus the configuration space $\mathcal M$ gets reduced to the product of $N/2$ $m$-dimensional spheres, giving a configuration space $\mathcal M_0$ of dimension $mN/2$. Some of the constraints are also satisfied automatically, we only need to solve the following equations:
\[
f(c(P_1)) = \dots = f(c(P_{N/2})) = 0, \quad \vol P_1 - \vol\mathbb S^n/N = \dots = \vol P_{N/2} - \vol\mathbb S^n/N = 0.
\]
The number of constraints written in this way is also $mN/2$. In fact, we have written some redundant volume constraints, but writing the constraints this way shows that we can interpret them as $m$ sets of $N/2$ equations with the same action of the symmetry group inside each set. The flip at the root, for example, will change the sign of all the constraints. The symmetry group this time is $G = \mathbb Z/2 \times \Sigma_{N/2}$, the first factor flips the root, the second is responsible of its subtrees. 

The Borsuk--Ulam-type statement for the modified problem is established by the \emph{test map scheme}, see also \cite{klartag2016}. We consider a linear projection $\pi: \mathbb S^m \to \mathbb R^m$ and replace, for each $i$, the pair 
\[
\left(f(c(P_i)), \vol P_i - \vol\mathbb S^n/n\right)
\] 
with the projection $y_i = \pi(\nu^0_i + s \nu^1_i + \dots + s^\ell \nu^\ell_i)$, where $s$ is a small number and $\nu_i^j$ is the normal that participates in the building of the part $P_i$ on level $j$ of the tree. It is easy to see that such a collection of $(y_1, \ldots, y_N)\in (\mathbb R^m)^N$ keeps the symmetries of $\left(f(c(P_i)), \vol P_i - \vol\mathbb S^n/n\right)_{i=1}^N$ under the action of $\Sigma_N^{(2)}$ and its subgroup $G$.

It is only possible to satisfy the constrains $y_1 = \dots = y_{N/2} = 0$ over $\mathcal M_0$ by choosing $\nu_i^j$ from the pairs of points with $\pi(\nu_i^j) = 0$. Similar to the original problem, this describes a unique non-degenerate orbit of zeroes of the test $G$-equivariant map $\mathcal M_0\to \mathbb R^{nN/2}$. From the parity considerations, for any other $G$-equivariant map $\mathcal M_0\to \mathbb R^{mN/2}$ maps something to zero. This establishes the needed Borsuk--Ulam-type result for mapping the centers of the pancakes into zero. The rest of the proof proceeds as in \cite{grom2003,mem2009}.
\end{proof}

It turns out that for radially symmetric measures in $\mathbb R^n$ and odd maps we have a very general result:

\begin{theorem}
\label{theorem:odd-with-density}
Let $\mu$ be a radially symmetric measure in $\mathbb R^n$ and $f : \mathbb R^n \to \mathbb R^k$ be an odd continuous map. Then
\[
\mu (f^{-1}(0)+t) \ge \mu (\mathbb R^{n-k} + t).
\]
\end{theorem}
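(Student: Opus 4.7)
The natural plan is to reduce the theorem to the spherical case (Theorem \ref{theorem:neighborhood-odd}) by polar decomposition of the radial measure $\mu$. Since $\mu$ is $O(n)$-invariant, it admits the disintegration
$$\mu = \int_0^\infty \sigma_r \, d\nu(r),$$
where $\sigma_r$ is the rotation-invariant probability measure on the sphere $r\mathbb{S}^{n-1}$ and $\nu$ is a Borel measure on $[0,\infty)$ encoding the radial profile of $\mu$. Both sides of the target inequality then disintegrate as
$$\mu(f^{-1}(0)+t) = \int_0^\infty \sigma_r\bigl((f^{-1}(0)+t) \cap r\mathbb{S}^{n-1}\bigr) \, d\nu(r)$$
and similarly for $\mu(\mathbb{R}^{n-k}+t)$. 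Hence it suffices to prove the sphere-level comparison
$$\sigma_r\bigl((f^{-1}(0)+t) \cap r\mathbb{S}^{n-1}\bigr) \ge \sigma_r\bigl((\mathbb{R}^{n-k}+t) \cap r\mathbb{S}^{n-1}\bigr)$$
for every fixed $r > 0$ and then integrate against $\nu$.

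For each $r > 0$ the restriction $f_r := f|_{r\mathbb{S}^{n-1}} : r\mathbb{S}^{n-1} \to \mathbb{R}^k$ inherits oddness and continuity from $f$, and Theorem \ref{theorem:neighborhood-odd} (rescaled from the unit sphere to $r\mathbb{S}^{n-1}$) supplies a lower bound on the $t$-neighborhood of $f_r^{-1}(0)$ on that sphere in terms of the corresponding neighborhood of an equatorial subsphere $r\mathbb{S}^{n-k-1}$. The containment $f_r^{-1}(0) = f^{-1}(0) \cap r\mathbb{S}^{n-1} \subseteq f^{-1}(0)$ transfers this bound to the left-hand side of the sphere-level inequality, since the Euclidean $t$-neighborhood of $f^{-1}(0)$ in $\mathbb{R}^n$ intersected with $r\mathbb{S}^{n-1}$ is at least as large as that of $f_r^{-1}(0)$ alone. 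This part of the argument is essentially formal.

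The main technical obstacle is identifying, on the right-hand side of the sphere-level bound, the neighborhood of the equatorial subsphere $r\mathbb{S}^{n-k-1}$ produced by Theorem \ref{theorem:neighborhood-odd} with $(\mathbb{R}^{n-k}+t) \cap r\mathbb{S}^{n-1}$. On the sphere of radius $r$, the set of points within Euclidean distance $t$ of the linear subspace $\mathbb{R}^{n-k}$ is in general strictly larger than the set within Euclidean distance $t$ of the subsphere $r\mathbb{S}^{n-k-1}$ (they agree only in the regime $t \ll r$), so Theorem \ref{theorem:neighborhood-odd} used as a black box yields a weaker inequality than required. To close this gap I would either revisit the pancake-plus-Borsuk--Ulam scheme of the proof of Theorem \ref{theorem:neighborhood-odd} with the affine subspace $\mathbb{R}^{n-k}$ as the comparison model (the symmetry portion of the argument is insensitive to this change, and each centrally symmetric pancake contribution should be compared to a slab of $\mathbb{R}^{n-k}$ rather than a portion of $r\mathbb{S}^{n-k-1}$), or apply the pancake decomposition directly in $\mathbb{R}^n$ to centrally symmetric partitions respecting $\mu$, exploiting the full $O(n)$-invariance of $\mu$ to estimate each pancake against its slab model. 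Once the sphere-level inequality with the affine subspace model is established, integration over $r$ against $d\nu$ yields the theorem.
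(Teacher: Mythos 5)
Your reduction to a sphere-level comparison via the polar disintegration of $\mu$ is exactly right, and you have correctly identified the obstruction: applying Theorem~\ref{theorem:neighborhood-odd} to $f|_{r\mathbb S^{n-1}}$ compares $(Z+t)\cap r\mathbb S^{n-1}$ (where $Z=f^{-1}(0)$) against a neighborhood of the equatorial subsphere $r\mathbb S^{n-k-1}$, which is strictly smaller than the target set $(\mathbb R^{n-k}+t)\cap r\mathbb S^{n-1}$. However, your proposed fixes --- rebuilding the pancake/Borsuk--Ulam argument of Theorem~\ref{theorem:neighborhood-odd} with an affine slab as the comparison model, or running a fresh pancake decomposition directly in $\mathbb R^n$ --- are left as sketches and are not carried out, so the proposal has a genuine gap at the decisive step.

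The paper closes this gap without touching the proof of Theorem~\ref{theorem:neighborhood-odd}: rather than restricting $f$ to $r\mathbb S^{n-1}$, one restricts it to the \emph{smaller} sphere $s\mathbb S^{n-1}$ with $s=\sqrt{r^2-t^2}$ and uses the inclusion $Z\cap s\mathbb S^{n-1}\subseteq Z$, which gives
\[
(Z+t)\cap r\mathbb S^{n-1}\ \supseteq\ \bigl((Z\cap s\mathbb S^{n-1})+t\bigr)\cap r\mathbb S^{n-1}.
\]
The set on the right is precisely the radial inflation by the factor $r/s$ of the spherical $\alpha$-neighborhood of $Z\cap s\mathbb S^{n-1}$ inside $s\mathbb S^{n-1}$, where $\alpha=\arcsin(t/r)$ is the angular radius, and Theorem~\ref{theorem:neighborhood-odd}, rescaled to $s\mathbb S^{n-1}$, bounds the volume of that spherical neighborhood from below by the corresponding neighborhood of $s\mathbb S^{n-k-1}$. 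Crucially, for $Z=\mathbb R^{n-k}$ the displayed inclusion is an equality: a short computation shows that every point of $(\mathbb R^{n-k}+t)\cap r\mathbb S^{n-1}$ lies within Euclidean distance $t$ of $\mathbb R^{n-k}\cap s\mathbb S^{n-1}=s\mathbb S^{n-k-1}$, so the lower bound is sharp on the model. Multiplying the per-sphere inequality by the inflation Jacobian and the radial density $\rho(r)$ and integrating over $r$ then gives the theorem. In short, your slicing strategy and your diagnosis are both correct, but the missing idea is to source the zero set from the smaller sphere $s\mathbb S^{n-1}$ rather than from $r\mathbb S^{n-1}$ itself; with that one change Theorem~\ref{theorem:neighborhood-odd} is usable as a black box and no modification of its pancake argument is needed.
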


\begin{proof}
Put $Z=f^{-1}(0)$, it suffices to consider measures with a radially symmetric density $\rho$, since the general case can be obtained by weak approximation of any measure by measures with density.

Consider the intersection of $Z+t$ and a sphere $\mathbb S^{n-1}_r$ of radius $r$. We will estimate this slice from below by the $t$-neighborhood (in the Euclidean metric) of $Z\cap \mathbb S^{n-1}_s$ with $s=\sqrt{r^2-t^2}$. In terms of the spherical geometry, we consider $Z\cap \mathbb S^{n-1}_s$ in $\mathbb S^{n-1}_s$, consider its $\alpha$-neighborhood in spherical geometry with $\alpha=\arcsin t/r$ and then inflate this neighborhood $r/s$ times to put it onto $\mathbb S^{n-1}_r$, this will be precisely the $t$-neighborhood of $Z\cap \mathbb S^{n-1}_s$ in the Euclidean metric intersected with $\mathbb S^{n-1}_r$.

From Theorem~\ref{theorem:neighborhood-odd} we have an estimate for the $\alpha$-neighborhood of $Z\cap \mathbb S^{n-1}_s$. Then we multiply it by $r/s$, multiply by $\rho(r)$, and integrate over $r$ to have an estimate from below for $\int_{Z+t} \rho(x)\; dx$. There is no need to write down the explicit formulas since for the case $Z=\mathbb R^{n-k}$ we always have a strict equality in all steps of this estimate, which we just put in the right hand side of the total estimate.
\end{proof}

Another result was established in~\cite[Theorem 5.2]{akopyan2017cat} using Gromov's version of the Borsuk--Ulam theorem for the images of the $(1-t)$-scaled centers of the pancakes: Suppose $K\subset\mathbb R^n$ is a convex body, $\mu$ is a finite log-concave measure supported in $K$, and $f : K\to Y$ is a continuous map to a $(n-k)$-manifold $Y$. Then for any $t \in [0, 1]$ there exists $y\in Y$ such that $\mu (f^{-1}(y) + tK) \ge t^{n-k} \mu K$. 

Using the modifications in the Borsuk--Ulam-type argument from the proof of Theorem~\ref{theorem:neighborhood-odd}, we readily obtain its version for centrally symmetric body and measure and an odd map (compare also with \cite[Theorem 5.7]{klartag2016}):

\begin{theorem}
\label{theorem:waist-norm-odd}
Suppose $K\subset\mathbb R^n$ is a centrally symmetric convex body, $\mu$ is a finite centrally-symmetric log-concave measure supported in $K$, and $f : K\to \mathbb R^{n-k}$ is an odd continuous map. Then for any $t \in [0, 1]$
\[
\mu (f^{-1}(0) + tK) \ge t^{n-k} \mu K.
\]
\end{theorem}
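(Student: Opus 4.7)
The plan is to combine the pancake argument used for \cite[Theorem~5.2]{akopyan2017cat} with the odd-equivariant modification of the Borsuk--Ulam scheme already developed in the proof of Theorem~\ref{theorem:neighborhood-odd}. First I would apply a centrally symmetric version of the pancake decomposition (Theorem~\ref{theroem:pancakes}) to $K$ with measure $\mu$: for every $\ell$, decompose $K$ into $2N=2^{\ell+1}$ convex parts paired as $\{P_i,-P_i\}_{i=1}^N$, each of $\mu$-measure $\mu(K)/(2N)$ and $\delta$-close to an $(n-k)$-dimensional affine subspace. Such a symmetric decomposition is realised, as in the proof of Theorem~\ref{theorem:neighborhood-odd}, by taking the root cut orthogonally to a free parameter vector through the origin and running mirror-image subtrees on the two halves.

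Next I would select centers $c(P_i)\in P_i$ depending continuously on $P_i$ in the Hausdorff metric and odd-equivariantly, $c(-P_i)=-c(P_i)$; the $\mu$-barycenter of $P_i$ is a natural choice, and it is odd by central symmetry of $\mu$. To enforce that all the \emph{scaled} centers $(1-t)c(P_i)$ lie in $f^{-1}(0)$ -- the scaling by $(1-t)$ being the same device as in \cite[Theorem~5.2]{akopyan2017cat} -- I would invoke the modified Borsuk--Ulam setup from the proof of Theorem~\ref{theorem:neighborhood-odd}: on the configuration space $\mathcal M_0$ of centrally symmetric binary partitions of $K$, form the $G$-equivariant test map with per-pancake coordinates $y_i=\pi(\nu_i^0+s\nu_i^1+\dots+s^\ell\nu_i^\ell)$ for one half of the partition, and apply the parity argument to conclude that any continuous $G$-equivariant candidate must vanish somewhere, forcing $f((1-t)c(P_i))=0$ for $i=1,\dots,N$. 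The corresponding equations on the mirror pancakes are automatic by oddness: $f((1-t)c(-P_i))=-f((1-t)c(P_i))=0$.

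Given such a partition, the inclusion $(1-t)c(P_i)+tK\subset f^{-1}(0)+tK$ is immediate for each $i$. The per-pancake log-concavity estimate from the original proof of \cite[Theorem~5.2]{akopyan2017cat} -- an application of Brunn--Minkowski on a pancake $\delta$-close to an $(n-k)$-dimensional subspace -- then gives
\[
\mu\bigl(\bigl((1-t)c(P_i)+tK\bigr)\cap P_i\bigr)\;\ge\; t^{n-k}\,\mu(P_i)\;-\;o(1),
\]
with the error going to $0$ uniformly as $\delta\to 0$. Summing over the $2N$ pancakes and passing to the limit $N\to\infty$, $\delta\to 0$ then produces the required estimate $\mu(f^{-1}(0)+tK)\ge t^{n-k}\mu(K)$.

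The main obstacle I expect is the Borsuk--Ulam step: one has to verify that the dimension of $\mathcal M_0$ balances the number of $G$-equivariant constraints $(f((1-t)c(P_i)),\,\mu(P_i)-\mu(K)/(2N))=0$ over $i=1,\dots,N$, and that the test map scheme with the symmetry group $G=\mathbb Z/2\times\Sigma_{N/2}$ still detects a unique non-degenerate equivariant orbit of zeroes when $f$ takes values in $\mathbb R^{n-k}$ rather than in $\mathbb R^k$. Once this is checked, the pancake log-concavity estimate and the summation are routine adaptations of the corresponding steps in the proof of Theorem~\ref{theorem:non-radial-gaussian} and of \cite[Theorem~5.2]{akopyan2017cat}.
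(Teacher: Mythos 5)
Your proposal follows the paper's approach exactly: the paper's own proof is a single sentence instructing the reader to combine the centrally symmetric Borsuk--Ulam modification from the proof of Theorem~\ref{theorem:neighborhood-odd} with the $(1-t)$-scaled-center pancake estimate of \cite[Theorem 5.2]{akopyan2017cat}, which is precisely the plan you lay out in detail. The dimension-balance check for the $G$-equivariant test map that you flag at the end is indeed the one thing to verify, and it goes through just as in Theorem~\ref{theorem:neighborhood-odd} with $k$ replaced by $n-k$ in the sphere of normals.
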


\section{Neighborhoods of critical submanifolds}
\label{section:manifold-neighborhood}

Let us develop the ideas of Section \ref{section:odd} about $t$-neighborhoods of given submanifolds. Bo'az Klartag in \cite{klartag2017} established another result of this kind, where a neighborhood volume estimate 
\[
\gamma(f^{-1}(0)+t)\ge \gamma(\mathbb C^{n-k} + t)
\]
for a holomorphic map $f : \mathbb C^n\to\mathbb C^k$ with $f(0)=0$. We try to understand such phenomena in view of the fact that the variety in that case $f^{-1}(0)$ is a critical point of the volume functional under local perturbations.

\subsection{Neighborhoods of submanifolds in the Euclidean space}

Consider a $k$-dimensional smooth submanifold $X\subset\mathbb R^n$; note that we have interchanged $k$ and $n-k$ compared to the statement of the waist theorem, but this will be convenient in our argument. Let $X$ be \emph{properly embedded}, which effectively means that $X$ is a closed submanifold without boundary. let $P_X : \mathbb R^n \to X$ be the metric projection, and let $\mu$ be a measure in $\mathbb R^n$. We assume $\mu$ has a log-concave smooth density $\rho$.

The map $P_X$ is defined almost everywhere and is not necessarily continuous. Still, it induces a fiber-wise decomposition of $\mu$
\begin{equation}
\label{eq:proj-decomposition}
\int_{\mathbb R^n} f d\mu = \int_{X} \left( \int_{P_X^{-1}(x)} f d\mu_x \right) d\vol_k(x)
\end{equation}
where $\mu_x$ is supported in $P_X^{-1}(x)$ for every $x\in X$, so its support has dimension $n-k$.

First let us note that every $\mu_x$ is log-concave. Indeed, the decomposition \eqref{eq:proj-decomposition} can be approximated by the following finite decompositions. Take a sufficiently dense discrete subset $S\subset X$ and build its Voronoi diagram, assigning to every $s\in S$ the convex set
\[
V_s = \{ x\in\mathbb R^n : \forall s'\in X\ |x-s| \le |x-s'|\}.
\]
When taking the limit over more and more dense $S$ the measures $\mu|_{V_s}$ approach $\mu_x$ if $s\to x$. This is quite informal, but can be made precise similar to the argument with passing to infinite partitions in~\cite{grom2003,mem2009}. The Voronoi diagram of $S$ is build by choosing the closest point in the set $S$ to a given point $x'$; in the limit we obtain the presentation of the whole measure $\mu$ as its disintegration into $\mu_x$, that is is we want to integrate a function over $\mu$, we first integrate it over every $\mu_x$ and then integrate over $x\in X$.

What is seen from the Voronoi diagram picture, is that since every restriction $\mu|_{V_s}$ is log-concave, the limit measures $\mu_x$ are also log-concave. If $\mu$ is more log-concave than a Gaussian density $e^{-A|x|^2}$ then $\mu_x$ is also more log-concave than the same density, let us call this situation \emph{strongly log-concave}. 

Now, if we want to estimate $\mu (X+t)$, where $X+t$ is the $t$-neighborhood of $X$, following~\cite{grom2003,mem2009}, we have to estimate $\mu_x B_x(t)$ for every $t$, if we know that 
\begin{equation}
\label{eq:pancake-estimate}
\mu_x B_x(t) \ge C_t \mu_x \mathbb R^n
\end{equation}
from some strong log-concavity assumption, then we integrate to obtain
\[
\mu (X+t) \ge C_t \mu \mathbb R^n.
\]

In order to have a working estimate \eqref{eq:pancake-estimate}, it is preferable to have the situation where $x$ is a point of maximum density of $\mu_x$, apart from the strong log-concavity of $\mu$. Here we consider the density $\rho_x$ of $\mu_x$ in its $(n-k)$-dimensional convex support, which coincides with the normal $(n-k)$-dimensional subspace to $X$ at $x$ locally. More precisely, in a tubular neighborhood of $X$ the density $\rho_x$ is smooth as a function of $y\in P_X^{-1}(x)$ and is also smooth as a function of $x$. 

Another way to consider $\rho_x$ is to say that this is the Jacobian of the exponential map from the normal bundle of $X$ to $\mathbb R^n$, which establishes a diffeomorphism of an open neighborhood $U_X$ of $X$ in its normal bundle with almost all of $\mathbb R^n$, the rest of $\mathbb R^n$ is called the \emph{cut locus}. This is seen from the representation of $\mu_x$ as the disintegration of $\mu$ under the metric projection onto $X$. Therefore the value $\rho_x$ may be considered as the density of the pull-back of the volume in $\mathbb R^n$ to $U_X$.

The condition that $\rho_x$ has a maximum of density at $x$ in the direction orthogonal to $X$ is purely local, in view of its log-concavity, and must have a local expression. In case of the constant density of $\mu$ this is definitely related to the trace of the second fundamental form of $X\subset\mathbb R^n$. Arguing geometrically, when $\mu$ has constant density near $x$, try to deform $X$ along a vector field $v$ supported in a neighborhood of $x$ in $X$ and orthogonal to $X$, we may note that the derivative of $\rho_x$ at points $x'\in\supp v$ in the direction of $v$, averaged over $x'$, vanishes if and only if the first variation of $\vol_k X$ in the direction $v$ vanishes. 

It seems plausible that for the case of not necessarily uniform $\mu$ with density $\rho$, the condition on the central point of $\mu_x$ being $x$ seems to mean vanishing of the first variation of the $\rho$-weighted $k$-dimensional Riemannian volume of $X$. Then a sufficient property of $X$ in order to have a good estimate for $\mu (X+t)$ is that $X$ is a critical point of the $\rho$-weighted Riemannian $k$-volume, say \emph{$\rho$-critical} for short. Let us state what the above argument proofs:

\begin{proposition}
Let a $k$-dimensional properly embedded submanifold $X\subset\mathbb R^n$ be $\rho$-critical for a Gaussian density $\gamma$ centered at the origin with density $\rho$. Then for all $t>0$
\[
\gamma(X+t) \ge \gamma(\mathbb R^k+t).
\]
\end{proposition}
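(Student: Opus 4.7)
The plan is to import the disintegration picture already set up in the paragraphs preceding the proposition and then run a fiber-wise version of the Caffarelli-based argument from the proof of Theorem~\ref{theorem:non-radial-gaussian}. Concretely, I would first observe that by \eqref{eq:proj-decomposition} applied to $f=\mathbf 1_{X+t}$, together with the fact that $y\in X+t$ iff $|y-P_X(y)|\le t$,
\[
\gamma(X+t)=\int_X \gamma_x\bigl(B_x(t)\bigr)\,d\vol_k(x),
\]
where $\gamma_x$ is supported in the Voronoi cell $P_X^{-1}(x)$, a convex subset of $\mathbb R^n$. Since $\gamma_x$ is the restriction of the Gaussian $\gamma$ to a convex set, it is strongly log-concave, so Caffarelli's theorem from Appendix~\ref{section:caffarelli} applies to it.

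Next, I would interpret the $\rho$-critical hypothesis in the manner indicated just before the proposition: it is precisely the condition that, for every $x\in X$, the monotone transportation $T_x$ of the standard Gaussian $\bar\gamma_{n-k}$ on the normal space $N_xX$ to $\gamma_x$ sends the origin to $x$, i.e.\ $T_x(0)=x$. The justification is to compute the first variation of the $\rho$-weighted $k$-volume of $X$ along a compactly supported normal field $v$: this variation can be expressed as an integral over $x\in\supp v$ of the derivative of the density $\rho_x$ of $\gamma_x$ at $x$ in the direction $v(x)$, weighted by the Jacobian factor appearing in the tubular neighborhood. Its vanishing for every $v$ then forces, at each point $x$, the gradient of the convex potential $U_x$ of $T_x$ at the origin to equal $x$, which is exactly $T_x(0)=x$.

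Granted this, Caffarelli's theorem gives that $T_x$ is $1$-Lipschitz and sends $0$ to $x$, so $B_0(t)\subset T_x^{-1}(B_x(t))$. Pushing forward and normalizing, this yields
\[
\frac{\gamma_x\bigl(B_x(t)\bigr)}{\gamma_x(\mathbb R^n)}\ \ge\ \frac{\bar\gamma_{n-k}\bigl(B_0(t)\bigr)}{\bar\gamma_{n-k}(N_xX)}\ =\ \frac{\gamma(\mathbb R^k+t)}{\gamma(\mathbb R^n)},
\]
where the last equality uses the product structure of the centered Gaussian $\gamma$ with respect to the splitting $\mathbb R^n=\mathbb R^k\oplus\mathbb R^{n-k}$. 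Integrating this inequality in $x\in X$ against $\vol_k$ and using $\int_X \gamma_x(\mathbb R^n)\,d\vol_k(x)=\gamma(\mathbb R^n)$ from the disintegration gives $\gamma(X+t)\ge \gamma(\mathbb R^k+t)$, as required.

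The main obstacle is the middle step: making rigorous the equivalence between $\rho$-criticality and the Caffarelli condition $T_x(0)=x$. The fiber density $\rho_x$ is the product of the Gaussian density on $N_xX$ with the Jacobian of the exponential map from the normal bundle of $X$, so one needs to compute the first variation of $\int_X\rho\,d\vol_k$ carefully, express it in terms of $\nabla\log\rho_x$ at $x$, and then translate this pointwise condition into the statement that the convex potential $U_x$ has its gradient at $0$ equal to $x$; one might also need to handle subtleties due to the cut locus (which has measure zero but is where the metric projection and the fibers $P_X^{-1}(x)$ become non-smooth). A secondary technical point is the Voronoi approximation argument for strong log-concavity of $\gamma_x$, which is only sketched in the preceding text and would benefit from a cleaner derivation via conditioning.
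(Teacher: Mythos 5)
Your overall framework — disintegrating $\gamma$ along the metric projection $P_X$, proving a fiber-wise estimate, and integrating — is the same as the paper's sketch. However, the crucial middle step contains a real error, not just a technical gap. You interpret $\rho$-criticality of $X$ as the condition $T_x(0)=x$, where $T_x$ is the Brenier/monotone transport from a normal-space Gaussian to $\gamma_x$. That is \emph{not} what the paper's variational argument yields. The paper's computation (which you gesture at) shows that $\rho$-criticality is equivalent to the condition that $x$ is the \emph{point of maximum density} of $\gamma_x$, i.e.\ $\nabla^\perp\log\rho_x(x)=0$, which after unwinding the Jacobian of the normal exponential map is $\nabla^\perp\log\rho = H$. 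This is a purely local condition at $x$, whereas $T_x(0)=x$ is a global condition on the entire fiber measure $\gamma_x$, including the shape of its (generally asymmetric) support.

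These two conditions are genuinely different, and max density does not imply $T_x(0)=x$. A one-dimensional counterexample: take $\nu$ to be the standard Gaussian restricted to $[-1,2]$. Its density has its maximum at $0$, but the monotone transport $T$ from the standard Gaussian to (the normalized) $\nu$ is the quantile map, so $T(0)$ is the \emph{median} of $\nu$, which is strictly positive. So $T(0)\ne 0$ even though $0$ is the max-density point. Consequently your chain $B_0(t)\subset T_x^{-1}(B_x(t))$ does not follow from $\rho$-criticality, and the fiber-wise inequality is unproved as written. The paper itself avoids this: it does not invoke Caffarelli for this proposition at all, but asserts the fiber-wise bound $\gamma_x(B_x(t))\ge C_t\,\gamma_x(\mathbb R^n)$ directly from strong log-concavity plus the max-density-at-$x$ condition (an estimate for strongly log-concave measures with prescribed mode, in the spirit of the $\sin^k$-concavity estimates in \cite{mem2009}), rather than via an optimal-transport map sending $0$ to $x$. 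To repair your proof you would need to replace the Caffarelli step with such a mode-centered comparison inequality for strongly log-concave fiber measures; the $T_x(0)=x$ route, as stated, is not available.
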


Returning to Klartag's theorem \cite{klartag2017} we observe that a zero set of holomorphic functions $X$ is volume-critical (see Section~\ref{section:algebraic}) but it is not necessarily $\rho$-critical for the Gaussian density. Therefore our observation is insufficient to reprove Klartag's theorem, but we can prove something useful in the sphere $\mathbb S^n$ with its intrinsic Riemannian structure and the Riemannian volume.

\subsection{Neighborhoods in the sphere and the complex projective space}
\label{section:algebraic}

A similar to the above argument works in the sphere $\mathbb S^n$ with its uniform Riemannian volume as $\mu$. In this case instead of log-concavity we need another notion, expressing the fact that the pancake measure $\mu_x$ can be approximated by the uniform measure restricted to convex subsets of $\mathbb S^n$, that is the notion of $\sin^k$-concave measures, introduced in \cite{mem2009}. From the results of \cite{mem2009} we only have to know that once a measure $\mu_x$ is $(n-k)$-dimensionally supported $\sin^k$-concave and is centered at $x$ in terms of the maximum density, there is an estimate
\[
\mu_x(B_x(t)) \ge c_{n,k}(t) \mu_x \mathbb S^n,
\]
where $c_{n,k} = \mu (\mathbb S^k + t) / \mu \mathbb S^n$ for a standardly embedded $\mathbb S^k\subset\mathbb S^n$. 

For a volume-critical $k$-dimensional submanifold $X\subset\mathbb S^n$ and its metric projection $P_X:\mathbb S^n\to X$ we again have a decomposition of the spherical Riemannian volume
\begin{equation}
\label{eq:proj-decomposition-sph}
\int_{\mathbb S^n} f d\vol = \int_{X} \left( \int_{P_X^{-1}(x)} f d\mu_x \right) d\vol_k(x),
\end{equation}
with $\sin^k$-concave measures $\mu_x$ (as shown by approximating this decomposition with a Voronoi partition). The ``center at $x$'' assumption is satisfied for $\mu_x$ by the variational argument, since otherwise the perpendicular to $X$ derivative of the density of $\mu_x$ would be non-zero and it would be possible to variate $X$ with a linear order change of its volume. Hence we have the estimate and integrating the estimate we obtain:

\begin{proposition}
\label{proposition:sphere-critical}
A volume-critical smooth properly embedded submanifold $X\subset\mathbb S^n$ of dimension $k$ satisfies, for $t>0$,
\[
\vol (X + t) \ge \vol (\mathbb S^k + t),
\]
where $\mathbb S^k\subset\mathbb S^n$ is standardly embedded.
\end{proposition}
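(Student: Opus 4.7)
The plan is to transfer the Euclidean argument sketched in the previous subsection to the spherical setting, using the intrinsic Riemannian structure of $\mathbb S^n$ and the notion of $\sin^k$-concave measures in place of log-concavity. First I would use the metric projection $P_X : \mathbb S^n \to X$, which is defined on the complement of the cut locus of $X$ (a measure-zero set), to disintegrate the uniform spherical volume as in \eqref{eq:proj-decomposition-sph}, obtaining fiber measures $\mu_x$ supported on the normal fibers $P_X^{-1}(x)$. Each $\mu_x$ is $(n-k)$-dimensionally supported and, by approximating the decomposition by the Voronoi partition of $\mathbb S^n$ with respect to a dense finite subset $S\subset X$ (each Voronoi cell being spherically convex, and the restriction of uniform measure to a spherically convex set is $\sin^k$-concave), the limit measures $\mu_x$ inherit the $\sin^k$-concavity property in the sense of Memarian~\cite{mem2009}.

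Next I would use the volume-criticality of $X$ to show that the density $\rho_x$ of $\mu_x$ in its $(n-k)$-dimensional support attains a maximum at the footpoint $x$ itself. Writing $\rho_x$ as the Jacobian of the normal exponential map at $x$, so that in a tubular neighborhood the spherical volume pulls back to a smooth density, a variational calculation gives that for any normal vector field $v$ compactly supported on $X$ the first variation of $\vol_k(X)$ under the exponential flow along $v$ equals (up to sign) the integral over $X$ of the normal derivative of $\rho_x$ at $x$ contracted with $v$. Since this variation vanishes for every admissible $v$ by criticality, the normal gradient of $\rho_x$ at $y=x$ must vanish, and $\sin^k$-concavity promotes a critical point in the normal direction to a global maximum in the fiber. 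Thus each $\mu_x$ is ``centered at $x$'' in the density sense required by \cite{mem2009}.

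With both properties at hand, I would invoke Memarian's pancake estimate: for any $\sin^k$-concave measure on an $(n-k)$-dimensional spherical fiber, centered at $x$ in the sense above,
\[
\mu_x(B_x(t)) \ge c_{n,k}(t)\, \mu_x(\mathbb S^n),\qquad c_{n,k}(t) = \frac{\vol(\mathbb S^k + t)}{\vol(\mathbb S^n)}.
\]
Integrating this estimate against $d\vol_k(x)$ over $X$ using \eqref{eq:proj-decomposition-sph}, and noting that $\bigcup_{x\in X} B_x(t) \cap P_X^{-1}(x)$ exhausts $X+t$ up to cut locus (measure zero), yields
\[
\vol(X+t) \ge c_{n,k}(t)\, \vol(\mathbb S^n) = \vol(\mathbb S^k + t),
\]
as required, with equality in the model case $X=\mathbb S^k$.

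The main obstacle I foresee is the rigorous identification of the normal derivative of $\rho_x$ at $y=x$ with the first variation of the $k$-volume of $X$; this requires a careful computation of the Jacobian of the spherical exponential map along the normal bundle and a justification that the formal variational identity survives the approximation by Voronoi cells (in particular near the cut locus, where $P_X$ and hence the disintegration is not smooth). A secondary point to verify is that the Voronoi approximation indeed converges in the appropriate weak sense so that $\sin^k$-concavity passes to the limit; this mirrors the corresponding Euclidean/log-concave argument already used in \cite{grom2003,mem2009} and should be standard once set up carefully.
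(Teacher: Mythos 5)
Your proposal is correct and follows essentially the same route as the paper's own proof: disintegrate the uniform spherical volume along the metric projection onto $X$, obtain $\sin^k$-concave fiber measures $\mu_x$ via Voronoi approximation, use volume-criticality (normal derivative of the Jacobian density vanishes, i.e.\ the mean curvature vanishes) to place the maximum density of $\mu_x$ at the footpoint $x$, then apply Memarian's pancake estimate and integrate. The caveats you flag (Voronoi convergence, behavior near the cut locus) are precisely the points the paper also treats informally.
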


We can also push forward the spherical observation to the complex projective space:

\begin{theorem}
\label{theorem:proj-analytic}
Let $X\subset\mathbb CP^n$ be an algebraic submanifold with $\dim_{\mathbb C} X = k$. If $\mathbb CP^n$ is considered with its standard Fubini--Study metric then, for $t>0$,
\[
\vol (X + t) \ge \vol (\mathbb CP^k + t),
\]
where $\mathbb CP^k\subset\mathbb CP^n$ is standardly embedded.
\end{theorem}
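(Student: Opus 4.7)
The plan is to reduce the statement to Proposition~\ref{proposition:sphere-critical} via the Hopf fibration $\pi : \mathbb{S}^{2n+1} \to \mathbb{CP}^n$. Let $\tilde{X} = \pi^{-1}(X) \subset \mathbb{S}^{2n+1}$; since $X$ is a smooth algebraic submanifold of complex dimension $k$, $\tilde{X}$ is a closed smooth real submanifold of dimension $2k+1$, realized as the intersection of the affine cone $C_X \subset \mathbb{C}^{n+1}$ over $X$ with the unit sphere. Because the Fubini--Study metric is defined so that $\pi$ is a Riemannian submersion, and because $\tilde{X}$ is $S^1$-invariant, one has $\pi^{-1}(X + t) = \tilde{X} + t$; similarly the standard $\mathbb{CP}^k \subset \mathbb{CP}^n$ lifts to the standard $\mathbb{S}^{2k+1} \subset \mathbb{S}^{2n+1}$ and $\pi^{-1}(\mathbb{CP}^k + t) = \mathbb{S}^{2k+1} + t$. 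The Fubini-type identity for Riemannian submersions with closed $S^1$-fibers of length $2\pi$ gives $\vol_{\mathbb{S}^{2n+1}}(\pi^{-1}(U)) = 2\pi \cdot \vol_{\mathbb{CP}^n}(U)$ for any Borel set $U \subset \mathbb{CP}^n$.

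Next I would verify that $\tilde{X}$ is a volume-critical submanifold of $\mathbb{S}^{2n+1}$. The cone $C_X \setminus \{0\}$ is a complex analytic subvariety of $\mathbb{C}^{n+1}$, hence calibrated by the appropriate power of the K\"ahler form (Wirtinger's inequality); in particular it is locally area-minimizing and has vanishing mean curvature at its smooth points. It is a classical fact that a smooth minimal cone in a Euclidean space meets any concentric sphere in a minimal submanifold of the sphere: the mean curvature of the link is exactly the spherical-tangential component of the mean curvature of the cone, which vanishes. Applying this to $C_X$ and $\mathbb{S}^{2n+1}$ shows that $\tilde{X}$ is a minimal, and therefore volume-critical, closed submanifold of $\mathbb{S}^{2n+1}$.

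Applying Proposition~\ref{proposition:sphere-critical} to $\tilde{X}$ now yields
\[
\vol(\tilde{X} + t) \ge \vol(\mathbb{S}^{2k+1} + t),
\]
and dividing both sides by the common factor $2\pi$ from the Hopf fibers, via the Fubini identity above, produces the desired inequality $\vol(X + t) \ge \vol(\mathbb{CP}^k + t)$.

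The main technical point to be careful about is the cone-to-link passage for minimality and the precise normalization of the Fubini--Study metric that makes $\pi$ a Riemannian submersion with unit-length-parametrized fibers of total length $2\pi$. Once these two inputs are set up correctly, the rest is a bookkeeping argument. A minor subtlety is that Proposition~\ref{proposition:sphere-critical} should be read in terms of real dimension: $\tilde{X}$ has real dimension $2k+1$ inside $\mathbb{S}^{2n+1}$, and the model subsphere appearing on the right-hand side is then $\mathbb{S}^{2k+1}$, which is exactly the Hopf lift of the model $\mathbb{CP}^k$, so the comparison object matches up correctly in both worlds.
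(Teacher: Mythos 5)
Your proof is correct and follows the same architecture as the paper's: lift to $\mathbb{S}^{2n+1}$ via the Hopf fibration, show the lift $\tilde X$ is volume-critical, apply Proposition~\ref{proposition:sphere-critical}, and descend through the length-$2\pi$ fibers. Where you diverge from the paper is in how the volume-criticality of $\tilde X$ is established. The paper first argues that $X$ itself is volume-minimizing in $\mathbb{CP}^n$ by calibrating against $\omega^k/k!$ (Wirtinger's inequality), and then invokes \cite[Theorem~2]{hsiang-lawson1971} to transfer criticality from the base to the total space of the Hopf submersion. You instead calibrate one level up: the affine cone $C_X\setminus\{0\}$ is a complex subvariety of $\mathbb{C}^{n+1}$, hence calibrated and minimal, and the classical cone--link correspondence then makes $\tilde X = C_X\cap\mathbb{S}^{2n+1}$ a minimal submanifold of the sphere. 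Both routes rest on the same Wirtinger-type calibration; yours is arguably the more elementary and self-contained, replacing Hsiang--Lawson's theorem on invariant minimal submanifolds with a direct second-fundamental-form computation for minimal cones. You are also somewhat more careful than the paper about the bookkeeping identity $\pi^{-1}(X+t)=\tilde X+t$, which indeed requires the $S^1$-saturation of $\tilde X$ together with the Riemannian-submersion property of $\pi$, as you note.
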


\begin{proof}
Note that \cite[Section I]{harvey-lawson1982} (also noted previously in \cite{wirtinger1936}, \cite{derham1957}, \cite[\S 4]{federer1965}, \cite{berger1970}) that $X$ is critical in $\mathbb CP^n$ because for any $2k$-dimensional $X'\subset \mathbb CP^n$ we have the \emph{calibration inequality}
\[
\vol_{2k} X' \ge \int_{X'} \frac{\omega^k}{k!},
\]
where $\omega$ is the Fubini--Study symplectic form of $\mathbb CP^n$. And at the same time for a complex subspace $X$ we have 
\[
\vol_{2k} X = \int_X \frac{\omega^k}{k!}.
\]
Since the form $\omega^k$ is closed, the right hand side of the estimates does not change under the deformations of $X$ to $X'$ and then $X$ is not only volume-critical, but also volume-minimal.

Now consider the Hopf map $H : \mathbb S^{2n+1} \to \mathbb CP^n$ and $Y=H^{-1}(X)$. From~\cite[Theorem 2]{hsiang-lawson1971} we know that $Y$ is also volume-critical. 

From  Proposition~\ref{proposition:sphere-critical} we have a lower bound for the volume of the $t$-neighborhood of $Y$, with equality holding for the case $X = \mathbb CP^k$. The Hopf map is a quotient map of Riemannian manifolds with all fibers of length $2\pi$, hence for the $t$-neighborhood of $X$ we will have the same estimate divided by $2\pi$. And again, the resulting estimate must be attained for $X=\mathbb CP^k$, thus completing the proof.
\end{proof}

The above observations allow to establish a particular case of the result of \cite{klartag2017} (for homogeneous maps, generalized to non-Gaussian radial measures):

\begin{theorem}
\label{theorem:radial-symmetric-homo-holo}
Let $Z$ be the zero set of a homogeneous holomorphic map $f : \mathbb C^n\to\mathbb C^k$; let $\mu$ be a radially symmetric measure on $\mathbb C^n$. Then for any $t>0$
\[
\mu(Z + t) \ge \mu(\mathbb C^{n-k} + t).
\]
\end{theorem}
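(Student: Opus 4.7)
The plan is to reduce the Euclidean estimate to the spherical neighborhood inequality of Proposition~\ref{proposition:sphere-critical}, using the homogeneity of $f$ to trivialize the radial direction and the radial symmetry of $\mu$ to integrate sphere by sphere. Homogeneity makes $Z$ a complex algebraic cone, so $\lambda Z = Z$ for every $\lambda \in \mathbb C^{*}$. Set $Y = Z \cap \mathbb S^{2n-1}$; this $Y$ is the Hopf preimage of the projective variety $\mathbb P(Z) \subset \mathbb CP^{n-1}$ of complex dimension $n-k-1$, so $Y$ has real dimension $2n-2k-1$ in $\mathbb S^{2n-1}$. The comparison object $\mathbb C^{n-k}$ is itself a cone, and its intersection with the unit sphere is the standardly embedded $\mathbb S^{2n-2k-1}$.

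For $y \in \mathbb S^{2n-1}$, the closest point of the complex cone $Z$ lies on some complex line $\mathbb C w \subset Z$ with $w \in Y$, at Euclidean distance $\sqrt{1 - |\langle y, w \rangle_{\mathbb C}|^2}$. Minimizing over $w$ and using that $Y$ contains the whole Hopf fiber through each of its points, this infimum equals $\sin d_{\mathrm{sph}}(y, Y)$, where $d_{\mathrm{sph}}$ is the round distance on $\mathbb S^{2n-1}$. The cone property then gives $\dist(x, Z) = r \sin d_{\mathrm{sph}}(x/r, Y)$ for every $x \in \mathbb S^{2n-1}_r$, so that
\[
(Z + t) \cap \mathbb S^{2n-1}_r = r \cdot \nu^{\mathrm{sph}}_{\arcsin\min(1, t/r)}(Y),
\]
and the same identity holds for $\mathbb C^{n-k} + t$ with $\mathbb S^{2n-2k-1}$ in place of $Y$.

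The submanifold $Y$ is volume-critical in $\mathbb S^{2n-1}$: exactly as in the proof of Theorem~\ref{theorem:proj-analytic}, $\mathbb P(Z)$ is calibrated (hence volume-minimizing) in $\mathbb CP^{n-1}$ by a power of the Fubini--Study form, and by the Hsiang--Lawson lifting theorem its Hopf preimage $Y$ inherits the volume-critical property on $\mathbb S^{2n-1}$. Applying Proposition~\ref{proposition:sphere-critical} then gives
\[
\vol_{\mathbb S^{2n-1}}(\nu^{\mathrm{sph}}_\alpha(Y)) \ge \vol_{\mathbb S^{2n-1}}(\nu^{\mathrm{sph}}_\alpha(\mathbb S^{2n-2k-1}))
\]
for every $\alpha \in (0, \pi/2]$. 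Combining with the slice identity of the previous paragraph, if $\sigma_r$ denotes the uniform probability measure on $\mathbb S^{2n-1}_r$, then $\sigma_r(Z+t) \ge \sigma_r(\mathbb C^{n-k}+t)$ for every $r, t > 0$.

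To finish, decompose the radial measure as $\mu = \int_0^\infty \sigma_r \, dm(r)$, where $m$ is the pushforward of $\mu$ under $x \mapsto |x|$, and integrate the sphere-wise inequality against $m$. The main technical obstacle I anticipate is that $Y$ may carry a singular locus inherited from singularities of $\mathbb P(Z)$, while Proposition~\ref{proposition:sphere-critical} is stated for smooth properly embedded submanifolds; this is the same issue already implicit in the proof of Theorem~\ref{theorem:proj-analytic} and is handled either by working on the regular part (whose complement has real codimension at least two in $Y$) or via a small generic perturbation of $f$ and a limit argument.
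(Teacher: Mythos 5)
Your proposal is correct and follows essentially the same route as the paper: perturb to ensure smooth sphere slices, show each slice $Y=Z\cap\mathbb S^{2n-1}$ is volume-critical via the calibration argument of Theorem~\ref{theorem:proj-analytic} and the Hsiang--Lawson lifting, apply Proposition~\ref{proposition:sphere-critical}, and integrate over spheres using the radial symmetry of $\mu$. The one small difference is that you exploit the cone structure to get an exact identity $(Z+t)\cap\mathbb S^{2n-1}_r = r\cdot\nu^{\mathrm{sph}}_{\arcsin(t/r)}(Y)$, whereas the paper (via the proof of Theorem~\ref{theorem:odd-with-density}) only uses the containment-based lower bound $(Z\cap\mathbb S^{2n-1}_s)+t\subseteq Z+t$ with $s=\sqrt{r^2-t^2}$, which is what one needs in the non-homogeneous setting and reduces to your identity when $Z$ is a cone; both yield the same estimate here, and the equality case $Z=\mathbb C^{n-k}$ is handled identically.
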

\begin{proof}
By generically perturbing the coordinates of the map $f$ we may assume that every intersection of $Z$ with a sphere $\mathbb S^{2n-1}_s$ of radius $ы$ centered at the origin is a smooth submanifold of the sphere, all such intersections for different $s$ are similar to each other from homogeneity. By the argument from the proof of Theorem \ref{theorem:proj-analytic} every such intersection $Z\cap \mathbb S^{2n-1}_s$ is volume-critical and therefore has an appropriate lower bound on the volume of its $t$-neighborhood for every $t$. The rest of proof is the same as the proof of Theorem \ref{theorem:odd-with-density}.
\end{proof}

The following opposite estimate (an upper bound on the volume of the neighborhood) for algebraic manifolds was prompted to us by Alexander Esterov in private communication:

\begin{theorem}
\label{theorem:degree-algebraic}
Let $X\subset\mathbb CP^n$ be an algebraic submanifold with $\dim_{\mathbb C} X = k$ and degree $d$. If $\mathbb CP^n$ is considered with its standard Fubini--Study metric then, for $t>0$,
\[
\vol (X + t) \le d \vol (\mathbb CP^k + t),
\]
where $\mathbb CP^k\subset\mathbb CP^n$ is standardly embedded.
\end{theorem}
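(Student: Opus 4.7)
The plan is to parameterize the tube $X+t$ via the normal exponential map of $X$ and bound its volume by the integrated Jacobian, then use the minimality of complex submanifolds in $\mathbb{CP}^n$ (the Wirtinger/calibration argument already used in the proof of Theorem~\ref{theorem:proj-analytic}) together with the degree-volume identity $\vol_{2k}(X) = d \cdot \vol_{2k}(\mathbb{CP}^k)$ to compare everything with the totally geodesic case $X = \mathbb{CP}^k$. Concretely, I consider the map $\Phi : X \times [0,t] \times S^{2(n-k)-1} \to \mathbb{CP}^n$ given by $\Phi(x, s, v) = \exp_x(sv)$ where $v$ ranges over the unit sphere $N^1_xX$ of the normal space; since $\Phi$ surjects onto $X + t$, the area formula gives
\[
\vol_{2n}(X + t) \le \int_X \int_0^t \int_{N^1_xX} |\det d\Phi|(x, s, v)\, d\sigma(v)\, ds\, d\vol_X(x).
\]

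Because $T_xX$ is $J$-invariant (so $T_xX \perp \mathrm{span}(v, Jv)$ for any $v \in N_xX$), in $\mathbb{CP}^n$ with constant holomorphic sectional curvature the Jacobi-field ODEs along the normal geodesic $\exp_x(sv)$ split into a $4$-eigenspace (the line through $Jv$) and a $1$-eigenspace (everything else transverse to $v$); the standard trigonometric solutions then give
\[
|\det d\Phi|(x, s, v) = \bigl|\det(\cos s \cdot I - \sin s \cdot A_v)\bigr| \cdot \frac{\sin(2s)}{2} \cdot \sin^{2(n-k)-2}(s),
\]
where $A_v : T_xX \to T_xX$ is the shape operator. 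Minimality of $X$ gives $\tr A_v = 0$; for $s$ before the first focal point the factors $\cos s - \mu_i \sin s$ are positive, and AM--GM applied to the eigenvalues $\mu_1, \ldots, \mu_{2k}$ of $A_v$ yields
\[
\prod_{i=1}^{2k}(\cos s - \mu_i \sin s) \le \cos^{2k}(s),
\]
matching the Jacobian of the totally geodesic case $X = \mathbb{CP}^k$, where $\Phi$ is additionally a diffeomorphism onto $\mathbb{CP}^n$ minus a measure-zero cut locus. Integrating over $v$, $s$, $x$ and using $\vol_{2k}(X) = d \cdot \vol_{2k}(\mathbb{CP}^k)$ then produces $\vol(X+t) \le d \cdot \vol(\mathbb{CP}^k + t)$, with the linear case furnishing the constant.

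The main obstacle lies past focal points, where some factor $\cos s - \mu_i \sin s$ vanishes or becomes negative and the plain AM--GM bound fails for the absolute value of the determinant. This can be sidestepped by using that the diameter of $\mathbb{CP}^n$ is $\pi/2$, so the desired inequality is immediate for $t \ge \pi/2$; for $t < \pi/2$ one restricts each normal geodesic to its segment up to the first focal radius and exploits that past a focal point the exponential map is no longer injective, so the integral of $|\det d\Phi|$ continues to upper-bound the pushforward volume. Equivalently, one may invoke a Heintze--Karcher-style comparison for tubes around minimal complex submanifolds of $\mathbb{CP}^n$ to handle the absolute-value issue uniformly in $s \in [0, \pi/2)$.
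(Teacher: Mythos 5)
Your proof is correct, but it takes a genuinely different route from the paper. The paper lifts $X$ to $X' = H^{-1}(X) \subset \mathbb{S}^{2n+1}$ via the Hopf map and then runs the pancake machinery from Sections~\ref{section:manifold-neighborhood} and~\ref{section:pancakes}: disintegrate the spherical volume along the metric projection to $X'$ into $\sin^{2k+1}$-concave fiber measures $\mu_x$ centered at $x$ (by minimality of $X'$), invoke the $\sin^{2k+1}$-concave comparison lemma of Memarian to bound $\mu_x B_x(t)/\rho_x(x)$ from above by the $\mathbb{S}^{2k+1}$ model, then integrate and use Crofton/calibration to identify $\vol_{2k+1} X' = d\cdot\vol_{2k+1}\mathbb{S}^{2k+1}$. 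You instead work directly in $\mathbb{CP}^n$ with the normal exponential map, compute the Jacobian via Jacobi fields in constant holomorphic sectional curvature (the $\frac{\sin 2s}{2}$ factor for the $Jv$ direction, $\sin s$ for the remaining transverse normal directions, $\det(\cos s\, I - \sin s\, A_v)$ for the tangential block), and use minimality ($\tr A_v = 0$) plus AM--GM to bound $\prod_i(\cos s - \mu_i\sin s) \le \cos^{2k} s$ before the first focal radius. This is essentially a Heintze--Karcher tube comparison, and it is close in spirit to the Riccati-equation argument sketched in the remark following the theorem (attributed to Sergei Ivanov), though you phrase it via the Jacobian rather than the evolution of the second fundamental form of $\partial(X+t)$. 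Your handling of the focal/cut locus is the right fix: since every point of $X+t$ is hit by a minimizing normal geodesic, one may restrict the domain of integration to segments before the first focal point, where the determinant (and each eigenvalue factor, by continuity from $s=0$) stays positive, so the area-formula bound and AM--GM both apply; and for $t \ge \pi/2$ the bound is trivial. What your approach buys is directness and independence from the pancake formalism; what the paper's approach buys is uniformity with the rest of the section, which is built around the $\sin^k$-concave disintegration and its comparison lemmas.
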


\begin{proof}
We again lift everything to $\mathbb S^{2n+1}$ and make estimates in every pancake. Note that in every pancake the measure $\mu_x$ is $\sin^{2k+1}$-concave and has maximum density $\rho_x$ in the point $x\in X'$. Its $\sin^{2k+1}$-concavity property means that for the ratio $\frac{\mu_x B_x(t)}{\rho_x(x)}$ is bounded from above by the similar ratio for the test case $X=\mathbb CP^k$, $X'=\mathbb S^{2k+1}$, see~\cite[Lemma 4.6]{mem2009} for example.

The integral of $\rho_x(x)$ over $X'$ is just the $(2k+1)$-volume of $X'$, this follows from the Minkowski volume formula. So integrating the estimate
\[
\mu_x B_x(t) \le \rho_x(t) \frac{\vol (\mathbb S^{2k+1} + t) }{\vol_{2k+1} \mathbb S^{2k+1}}
\] 
we obtain
\[
\vol (X' + t) \le \frac{\vol_{2k+1} X' \cdot \vol (\mathbb S^{2k+1} + t) }{\vol_{2k+1} \mathbb S^{2k+1}}.
\]

The volume $\vol_{2k+1} X' = 2\pi \vol_{2k} X$, by Crofton's formula or the calibration equality, is the volume of $\mathbb S^{2k+1}$ multiplied by the degree of $X$, so we have eventually
\[
\vol (X' + t) \le d \vol (\mathbb S^{2k+1} + t),
\]
which is equivalent to the required estimate.
\end{proof}

\begin{remark}
As was communicated to us by Sergei Ivanov, this result can be obtained by analyzing the Riccati equation for the second fundamental form of the hypersurface $H_t = \partial (X'+t)$,
\[
\mathrm{II}_t' = - (\mathrm{II})^2 - R,
\]
with the quadratic form $R$ obtained by plugging the normal vector of $H_t$ into the Riemann curvature form in the last and the first position. This equation shows that the trace of $\mathrm{II}$ in case of $X'$ decreases quicker than in case of $S^{2k+1}$, and so does the logarithm of the volume of the neighborhood, while for small $t$ both volumes have the same asymptotic behavior. This works for smooth $H_t$, but possible non-smoothness of the boundary after some $t$ may only improve the estimate.
\end{remark}

\begin{question}
Is it true that, for any closed $n$-dimensional Riemannian manifold $M$ with sectional curvature bounded from below by $1$, any volume-critical smooth closed $k$-dimensional $X\subset M$, and any $t>0$, we have
\[
\frac{\vol (X+t)}{\vol M} \ge \frac{\vol (\mathbb S^k + t)}{\vol \mathbb S^n}?
\]
\end{question}

The pancake approach does not work in this case, but it seems plausible that the answer follows from the investigation of the evolution of the volume and the second fundamental form of the boundary of $(X+t)$ when $t$ varies from $0$ to a certain value.

There is another question about real projective spaces:

\begin{question}
Does this result generalize to the estimate 
\[
\vol (X+t) \ge \vol(\mathbb RP^k + t)
\]
for $k$-dimensional submanifolds $X\subset\mathbb RP^n$ homologous to $\mathbb RP^k\subset\mathbb RP^n$?
\end{question}

\section{Appendix: Explanation of the Caffarelli theorem}
\label{section:caffarelli}

For the reader's convenience we give a more detailed argument from \cite{koles2011} explaining the Caffarelli theorem. 

\begin{theorem}[Caffarelli]
Assume a measure $\mu_Q$ has a smooth density $e^{-Q}$ on the whole $\mathbb R^n$, while another measure $\mu_P$ has a smooth density $e^{-P}$ on an open convex set, we assume $P$ convex. Assume also that for every $x\in \mathbb R^n$, $y$ in the domain of $\mu_P$, and a nonzero vector $v$ we have
\[
D_v^2 P(y) - D_v^2 Q(x) \ge 0,
\]
where $D^2_v$ denotes the second derivative in the direction of $v$ and also assume that $D_v^2P(y)$ is positive definite at every $y$ as function in $v$. Then the monotone transportation taking $\mu_Q$ to $\mu_P$ is $1$-Lipschitz.
\end{theorem}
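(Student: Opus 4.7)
The plan is to realize the required transport as the gradient $T = \nabla\phi$ of a convex Brenier potential and to prove directly that $D^2\phi(x)\preceq I$ almost everywhere, which is exactly the $1$-Lipschitz property of $T$. The pushforward identity $T_\#\mu_Q = \mu_P$ gives the Monge-Amp\`ere equation
\[
\log\det D^2\phi(x) \;=\; P(\nabla\phi(x)) - Q(x),
\]
and I will bound the top eigenvalue of $D^2\phi$ by a maximum-principle argument on $w_v(x) := \langle D^2\phi(x)\, v, v\rangle$ for a unit vector $v$. Supposing for contradiction that the largest eigenvalue $M$ of $D^2\phi$ exceeds $1$ somewhere, I pick $v$ to be a top eigenvector at a maximizer $x_0$ of $w_v$, so that $D^2\phi(x_0)\, v = M v$.

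Differentiating the Monge-Amp\`ere equation once and then a second time in the direction $v$ (using $\partial_l\phi^{ij}=-\phi^{ia}\phi_{abl}\phi^{bj}$) gives schematically
\[
\phi^{ij}\phi_{ijvv} \;=\; \phi^{ia}\phi^{bj}\phi_{abv}\phi_{ijv} \;+\; D^2 P(D^2\phi\cdot v,\, D^2\phi\cdot v) \;+\; \langle\nabla P,\phi_{\cdot vv}\rangle \;-\; D_v^2 Q,
\]
with $\phi^{ij}$ the inverse Hessian of $\phi$ and summation over repeated indices. At $x_0$ the first-order condition $\nabla w_v(x_0)=0$ reads $\phi_{vvk}(x_0)=0$ for each $k$, and by the total symmetry of third partials this also gives $\phi_{kvv}(x_0)=0$, so the term $\langle\nabla P,\phi_{\cdot vv}\rangle$ drops out. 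The second-order condition $D^2 w_v(x_0)\preceq 0$, coupled with positive-definiteness of $\phi^{ij}$, yields $\phi^{ij}\phi_{ijvv}(x_0)\le 0$. The first summand on the right is $\mathrm{tr}(\Phi^{-1}A\Phi^{-1}A)$ for $\Phi=D^2\phi$ and $A=(\phi_{ijv})$, and is nonnegative since it equals $\|\Phi^{-1/2}A\Phi^{-1/2}\|_F^2$. With $v$ a top eigenvector the middle term equals $M^2\, D_v^2 P(\nabla\phi(x_0))$, so the inequality collapses to
\[
D_v^2 Q(x_0) \;\ge\; M^2\, D_v^2 P(\nabla\phi(x_0)).
\]
Combined with the hypothesis $D_v^2 P(\nabla\phi(x_0)) \ge D_v^2 Q(x_0)$ and strict positivity of $D_v^2 P$, this forces $D_v^2 Q(x_0)>0$, whence $M^2\le 1$, a contradiction.

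The main obstacle I anticipate is not the algebra above but legitimizing the maximum-principle step: the Brenier potential of noncompactly supported densities is in general only $C^{1,\alpha}$, the classical Hessian need not exist pointwise, and the supremum of the top eigenvalue need not be attained. To handle this I would approximate, replacing $e^{-Q}$ and $e^{-P}$ by smooth log-concave densities supported on large balls (obtained, say, by Gaussian mollification followed by truncation and renormalization), chosen so that the pointwise dominance $D_v^2 P \ge D_v^2 Q$ and the positive-definiteness of $D_v^2 P$ survive. Caffarelli's global regularity theory for the Monge-Amp\`ere equation on bounded uniformly convex domains then supplies, for each approximation, a $C^2$ strictly convex potential $\phi_R$ to which the pointwise computation above applies in the interior, giving $\|D^2\phi_R\|\le 1$. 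Weak stability of optimal transport then passes this Lipschitz bound to the limit, yielding the theorem in full generality.
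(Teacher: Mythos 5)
Your proposal is correct and follows essentially the same route as the paper's: both maximize the top eigenvalue of the Brenier potential's Hessian over the base point and the direction, differentiate the logged Monge--Amp\`ere equation twice along that direction, use the first-order vanishing to kill the $\langle\nabla P,\phi_{\cdot vv}\rangle$ term and the second-order maximality together with nonnegativity of the $\mathrm{tr}\bigl(\Phi^{-1}A\Phi^{-1}A\bigr)$ piece to deduce $D_v^2 Q\ge M^2 D_v^2 P$, and then reach a contradiction with the hypothesis (your index computation $\phi^{ij}\phi_{ijvv}-\phi^{ia}\phi_{abv}\phi^{bj}\phi_{ijv}$ is exactly the paper's $2\tr B-\sum s_i^2$ from the matrix-Taylor expansion of $\ln\det$). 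The only cosmetic difference is that the paper translates source and target so the maximizer sits at the origin and expands $\ln\det$ in matrix form, while you keep index notation and are somewhat more explicit about the mollification/truncation needed to justify attaining the maximum; the paper treats that step as a limiting argument via stability of optimal transport.
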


\begin{proof}
Under the smoothness and positivity assumptions, the potential $U:\mathbb R^n\to \mathbb R$ of the transportation map from $\mu_Q$ to $\mu_P$ satisfies the equation:
\[
\det D^2 U(x) = e^{P(DU(x))-Q(x)},
\]
where $DU$ is the derivative of $U$ at a point $x$ that gives the transportation map $x\mapsto DU(x)$, and $D^2 U(x)$ is the Hessian quadratic form at a point $x$. If we are interested in the first and the second derivatives in a given direction $v$ we write $D_v U$ or $D_v^2 U$.

Taking the logarithm we obtain
\[
\ln\det D^2 U(x) = P(DU(x)) - Q(x).
\]
Applying additional translations (that do not change the assumptions of the theorem) assume we consider the situation at the origin and the image of the origin under the transportation is again the origin, thus $DU(0) = 0$. In this case the second order of the right hand side in $v$ near the origin is the quadratic form 
\[
D_{\Delta_0 (v)}^2 P (0) - D_v^2 Q(0),
\]
where $D^2 P$ and $D^2 Q$ are regarded as quadratic forms, whose values are taken at the vectors $\Delta_0 (v)$ and $v$ respectively, where $\Delta_0$ is the derivative of the transportation map, that is $D^2 U(0)$ regarded as a linear operator. If we take $v$ to be an eigenvector of $\Delta_0$ and assume its eigenvalue $\lambda$ is greater than $1$, then we see that the value of this quadratic form on $v$ becomes
\[
\lambda^2 D_v^2 P(0) - D_v^2 Q(0) = (\lambda^2 - 1) D^2_v P(0) + D_v^2 P(0) - D_v^2 Q(0),
\]
which is positive by the assumption of the theorem. We are going to show that there is a contradiction, thus showing that $D^2 U$ cannot have eigenvalues greater than $1$ and thus the transportation map is $1$-Lipschitz.

In order to have a contradiction we need to show that the quadratic term of $\ln\det D^2 U(v)$ in its expansion in $v\to 0$ is non-positive, this will be established under a certain choice of a point $x$ (the one we translate to origin) and a vector $v$ in which we expand this quantity. Let again $\Delta_0 = D^2 U(0)$, viewed as a symmetric positive definite matrix and let us check how the expression $D^2 U(x)$ changes when we change $x$, putting $x = tv$ with some fixed nonzero vector $v$ and variable $t\in\mathbb R$. We are interested in the first and the second order, so we assume
\[
D^2 U(tv) = \Delta_0 + \Delta_1 t + \Delta_2 t^2 + o(t^2),
\]
where we consider
\[
\Delta_1 = D^2 D_v U(0),\quad \Delta_2 = \frac{1}{2} D^2 D^2_v U (0)
\]
as symmetric matrices, and $\Delta_0$ can be assumed positive definite in the considered smooth transportation case. In order to simplify the formulas put $A = \Delta_0^{-1/2} \Delta_1 \Delta_0^{-1/2}$, $B = \Delta_0^{-1/2} \Delta_2 \Delta_0^{-1/2}$ and write
\[
\det (\Delta_0 + \Delta_1 t + \Delta_2 t^2 + o(t^2)) = \det \Delta_0\cdot\det (I + At + Bt^2 + o(t^2)), 
\]
where $I$ is the unit matrix. A simple calculation produces the expansion of the logarithm of this matrix expression
\[
\ln\det (\Delta_0 + \Delta_1 t + \Delta_2 t^2 + o(t^2)) = \ln\det \Delta_0 +  \tr A t + \left(\tr B + 
\tr \wedge^2 A - 1/2 (\tr A)^2 \right) t^2 + o(t^2),
\]
where $\tr \wedge^2 A$ is the second invariant polynomial of $A$. If $A$ is diagonalized with eigenvalues $s_1,\ldots, s_n$ then we have
\[
\tr \wedge^2 A - 1/2 (\tr A)^2 = \sum_{i < j} s_i s_j - 1/2 \left( \sum_i s_i \right)^2 = -1/2 \sum_i s_i^2 \le 0. 
\]
Hence we obtain a useful inequality for logarithms of determinants of matrices
\[
\ln\det (\Delta_0 + \Delta_1 t + \Delta_2 t^2 + o(t^2)) \le \ln\det \Delta_0 +  \tr A t + \tr B t^2 + o(t^2)
\]

Now assume we choose the unit vector $v$ and the point $x$ for which the second directional derivative $f(x) = D^2_v U(x)$ is maximal, it can be assumed that the maximum is attained in the situation when the support of $\mu_P$ is compact, in this case $f(x)$ will tend to zero as $|x|\to +\infty$. Other cases are reduced to this by going to the limit with the usage of the stability of the transportation map. So we have $x$ and $v$ producing a maximal $D^2_v U(x)$, as before, we translate this optimal $x$ and its transportation image $D U(x)$ to the origin to simplify the calculations. 

At this point the matrix $\Delta_2 = 1/2 D^2 f = 1/2 D^2 D_v^2 U$ is negative semidefinite from the maximality assumption; hence $B= \Delta_0^{-1/2} \Delta_2 \Delta_0^{-1/2}$ is also negative semidefinite. The maximality assumption on the vector $v$ means that the quadratic form $D_v^2 U (0)$ attains its maximum in $v$; hence $v$ has to be the eigenvector of the corresponding linear operator $\Delta_0$ (the same matrix as $D^2U(0)$) with the maximal possible eigenvalue $\lambda$, which we assumed to be greater than $1$ somewhere and hence greater than $1$ in the situation where it is maximal. In this situation the second derivative of the expression
\[
\ln\det D^2 U (tv)
\]
equals to the trace of a negative semidefinite matrix $B = \Delta_0^{-1/2} \Delta_2 \Delta_0^{-1/2}$ and is non-positive, while the second derivative of 
\[
P(DU (tv)) - Q(tv) = D_{\Delta_0 (tv)}^2 P (0) - D_{tv}^2 Q(0) + o(t^2) = t^2 \left( \lambda^2 D_{v}^2 P (0) - D_{v}^2 Q(0) \right) + o(t^2)
\]
is positive at $t=0$ by the assumption of the theorem. This is a contradiction.
\end{proof}

\section{Appendix: Explanation of the pancake decomposition}
\label{section:pancakes}

In \cite{klartag2016} the method of producing pancakes for Gromov's waist theorem was different compared to the original work \cite{grom2003}. Some of the readers told us that because of this they have an impression that the method to produce the pancake decomposition in \cite{grom2003} was incorrect. That is why we have decided to give more explanations here about the argument from \cite{grom2003}.

Let us assume we are proving Theorem \ref{theroem:pancakes}, although we will sometimes speak about the extension of the argument to its spherical version, where the Euclidean ball of radius $R$ is replaced by the sphere $\mathbb S^n$ and the binary partition is made by hyperplanes through the center of the sphere.

Let us choose a sequence of uniformly distributed linear subspaces $\{L_i\}$ in $\mathbb R^n$ of dimension $n-k-1$ each. Note that we only consider the essential case $n>k$, hence the dimension is at least $0$. In fact we do not need a uniform distribution in the Grassmannian $G_{n-k-1}(\mathbb R^n)$, we will be quite satisfied if the sequence $L_i$ visits any open subset of the Grassmannian infinitely many times.

\subsection{General version of the equipartition argument}
After that we build a binary decomposition of $\mathbb R^n$ into $N=2^I$ parts, on $i$th stage of the decomposition we use hyperplane cuts parallel to $L_i$. Every individual cut (including the cut at infinity) is parameterized by a sphere $S^{k+1}$, the total hierarchy of cuts is parameterized by $(S^{k+1})^{N-1}$. Similar to what is happening for the standard ham sandwich theorem, the generalized Borsuk--Ulam type theorem (from \cite{grom2003}) for $\mathfrak S^{(2)}_N$-equivariant (the $2$-Sylow subgroup of the permutation group) maps 
\[
\left( S^{k+1} \right)^{N-1} \to \left(\mathbb R^{k+1} \right)^{N}/\Delta(\mathbb R^{k+1}),\quad\text{where}\quad \Delta(x) = ( \underbrace{x,\ldots,x}_N )
\]
allows us to find a binary partition with equal measures of parts and equal images $F(P_i)$. The proof of this generalization of the Borsuk--Ulam theorem is essentially a parity counting argument, establishing that generically the number of solution $\mathfrak S^{(2)}_N$-orbits is odd, as explained in \cite{klartag2016}, for example.

For empty or degenerate parts the values $F(P_i)$ are undefined and we need to deal with it in order to apply the Borsuk--Ulam-type theorem. Let $Z$ be the closed subset of $(S^{k+1})^{N-1}$ corresponding to the partitions with equal measures of all parts. Evidently, the values $F(P_i)$ are well-defined on $Z$ and in a neighborhood of it. After that we may modify the functions $F(P_i)$ so that they remain the same over $Z$ and get extended to the whole $(S^{k+1})^{N-1}$ continuously, to achieve this, it is sufficient to multiply them by a continuous function supported in the neighborhood of $Z$. Then the Borsuk--Ulam type theorem is applied to the values $\mu(P_1),\ldots,\mu(P_N)$, and the $k$-dimensional vectors $F(P_1),\ldots, F(P_N)$, now continuously defined over the whole configurations space. Once we obtain a solution from the Borsuk--Ulam-type theorem, the obtained equality $\mu(P_1)=\dots=\mu(P_N)$ guarantees we are on $Z$, and hence on the set where $F(P_i)$ are originally defined.

\subsection{Modification of the equipartition argument for the spherical waist theorem}

The pancake decomposition argument is also used in Gromov's waist of the sphere theorem \cite{grom2003,mem2009}, and in its generalization \cite{karvol2013} for maps to smooth manifolds $f : \mathbb S^n\to M^k$. In this case we use the binary partition of the sphere $\mathbb S^n$ by cuts through the origin, we again choose a uniformly distributed sequence of linear subspaces $L_i\subseteq \mathbb R^{n+1}$ of dimension $n-k-1$ each (assuming the nontrivial case $n>k$). The possible cuts on stage $i$ are made by hyperplanes through $L_i$, and in every node of the binary tree the possible cuts are again parametrized by spheres $S^{k+1}$, the unit spheres of the orthogonal complements of $L_i$. 

The corresponding version of the Borsuk--Ulam theorem in this case is about $\mathfrak S^{(2)}_N$-equivariant maps (not stated explicitly in \cite{karvol2013}, but proven there)
\[
\left( S^{k+1} \right)^{N-1} \to \left(M^k\times \mathbb R \right)^{N}/\Delta(M^k\times \mathbb R),\quad\text{where}\quad \Delta(x) = ( \underbrace{x,\ldots,x}_N ),
\]
which for $M^k = \mathbb R^k$ is the same as above, the factor $\mathbb R$ corresponds to the requirement to equipartition the volume.

In this case we need more care to continuously extend the map $F(\cdot)$ from its original domain, since the target space $M^k$ need not be contractible. Using the fact that $F = f\circ c$, it is sufficient to continuously extend the center map $c(P_i)$ assigning a ``center point'' to a part in the sphere, and then compose the extension of $c$ with $f$. Moreover, since we are going to plug $F$ into the Borsuk--Ulam-type theorem, we may consider a part $P_i$ as a member of the binary partition and let $c(P_i)$ depend continuously on the position of $P_i$ in the partition, minding that the collection of $c(P_i)$ must keep some equivariance under the $\mathfrak S^{(2)}_N$-action. Then we may impose the restriction that the extended center map $c(P_i)$ will also depend on the first partition stage, and the extended $c(P_i)$ has to go to the open hemisphere of the first sphere partition stage to which a possibly empty or degenerate $P_i$ is assigned, this restriction is valid wherever $c(P_i)$ is defined originally. This way the map extension problem is to extend a continuous section of a fiber bundle with contractible fibers, such an extension problem always has a solution. 

What \emph{seems important to us} is that the center point selection procedure for spherical convex sets in \cite{grom2003,mem2009} still remains somewhat complicated and we do not see a Caffarelli-type simplification for the spherical case at the moment.

\subsection{Simplified version of the equipartition argument}
In fact, the application if the Borsuk--Ulam-type theorem in the proof of Theorem~\ref{theroem:pancakes} can be simplified, although this simplification seems to be not suitable in the spherical case. 

Consider \emph{only} binary decompositions with equal measures of parts. It means that on $i$th stage we cut all parts into equal halves by hyperplanes parallel to $L_i$. Every such cut is parameterized by unit vectors $u\perp L_i$, just because after we choose the direction of a hyperplane we in fact find a unique hyperplane that cuts a given part in two parts of equal measure. Since every such $u$ is chosen from a copy of $S^k$, the total hierarchy of equipartition cuts is parameterized by $(S^{k})^{N-1}$. The generalized Borsuk--Ulam type theorem for $\mathfrak S^{(2)}_N$-equivariant maps 
\[
\left( S^{k} \right)^{N-1} \to \left(\mathbb R^{k} \right)^{N}/\Delta(\mathbb R^{k}),\quad\text{where}\quad \Delta(x) = ( \underbrace{x,\ldots,x}_N )
\]
is then applied to the map that sends a binary partition to the collection of values
\[
F(P_1), \ldots, F(P_N)
\]
and gives a configuration with all the $F(P_i)$ equal.

\subsection{Proof that the parts are pancakes}

Having a binary partition into parts of equal measure and with coinciding $F(P_i)$, we need to show that for any given $\delta$ we can choose sufficiently large $N$ so that almost all the parts are pancakes.

The contrary to the needed pancake property is that a part $P_i$ contains a $(k+1)$-dimensional disc $D$ of radius $\delta$, and this happens for arbitrarily large $N$. This can be seen, following \cite{klartag2016}, from the fact that every convex body in $\mathbb R^n$ can be approximated by its John ellipsoid up to scaling by $n$, and if an ellipsoid is not $\delta/n$-close to the affine subspace spanned by its $k$ principal exes (which would imply $P_i$ is $\delta$-close to the same $k$-dimensional affine subspace) then the ellipsoid does contain a $(k+1)$-dimensional disk $D$ of radius $\delta/n$. Thus we assume the contrary and denote $\delta/n$ by $\delta$ for brevity.

An elementary observation shows that if a convex body $K\subset\mathbb R^n$ is cut into equal halves by a hyperplane $h$ then $h$ divides the corresponding directional width of $K$ in ratio at least $1 - 2^{-n} : 2^{-n}$. Since we have fixed the dimension $n$, this is just some small constant. Considering our measure instead of the volume, whose density is in the range $[m, M]$ we will have the same estimate for the width ratio with a rough constant $c_\mu = \frac{m}{M}(1 - 2^{-n})$. 

Now we track the origin of the part that contains the disk $D$, let the affine full of $D$ be $A$. During the production of this part, by the assumption, we have made arbitrarily large number of cuts almost parallel to the $(n-k-1)$-dimensional orthogonal complement of $A$, just because many $L_i$ were close to this complement. Those cuts produced the convex parts of the big ball, 
\[
Q_1\supset Q_2 \supset\dots\supset Q_I = P_i,
\] 
each $Q_{i+1}$ produced from $Q_i$ by an equipartition of its $\mu$-measure. The orthogonal projection $\pi_A(Q_i)$ was evidently inclusion-decreasing. Moreover, for cuts with $L_i$ close to $A^\perp$ there was a direction $u$ in $A$ such that the width $w_u( \pi_A(Q_{i+1}) )$ was bounded from above
\[
w_u( \pi_A(Q_{i+1}) ) \le (1 - c_\mu/2) w_u( \pi_A(Q_{i}) ).
\]
From this it follows that, assuming sufficiently large number of such width-decreasing cuts we get a contradiction with the inclusion $\pi_A(Q_I)\supseteq D$. One way to argue is that we may also assume that the number of cuts with approximately same $u$ was also large and resulted in the decrease of $w_u$ from $R$ to a number smaller than $\delta$. The other way is to note that the $(k+1)$-dimensional volume of $\pi_A(Q_i)$ under cuts with $L_i$ close to $A^\perp$ also had a guaranteed decrease
\[
\vol_{k+1} \pi_A(Q_{i+1})  \le \left(1 - (c_\mu/2)^{k+1}\right) \vol_{k+1} \pi_A(Q_{i}),
\]
which gives a decrease from $\frac{\pi^{(k+1)/2}}{((k+1)/2)!} R^{k+1}$ to $\vol_{k+1} D = \frac{\pi^{(k+1)/2}}{((k+1)/2)!} \delta^{k+1}$ in a finite number of steps, and gives a contradiction so sufficiently large $N$.

If we are interested in the spherical version, then the local situation is essentially the same up to small curvature. Having a $(k+1)$-dimensional disk $D$ of radius $\delta$ (for small $\delta$ it is very close to its Euclidean analogue) in a single part will contradict the fact that many of the cuts were almost passing through the $(n-k-1)$-dimensional orthogonal complement in $\mathbb R^{n+1}$ to the $(k+2)$-dimensional cone over $D$, that is many cuts were essentially perpendicular to $D$ and had to decrease its width in some direction.

\bibliography{../Bib/karasev,../Bib/akopyan}

\bibliographystyle{abbrv}
\end{document}